\newtheorem{theorem}{Theorem}[section]
\newtheorem{proposition}[theorem]{Proposition}
\newtheorem{lemma}[theorem]{Lemma}
\numberwithin{equation}{section}
\DeclareMathOperator{\hol}{h\ddot{o}l}
\title[]{Analyticity for Solution of {I}ntegro-{D}ifferential {O}perators }
\author{Simon Blatt}
\address[Simon Blatt]{Departement of Mathematics, Paris Lodron Universit\"at Salzburg, Hellbrunner Strasse 34, 5020 Salzburg, Austria}
\email[Simon Blatt]{simon.blatt@sbg.ac.at}
\keywords{integro-differential equations, fractional Laplacian, non-linear elliptic equation, real analytic solutions, Faá di Bruno’s formula, method of majorants}
\subjclass{35A20, 35R11}
\date{\today}
\begin{document}

\maketitle

\begin{abstract}

We prove that for a certain class of kernels $K(y)$ that viscosity solutions of 
the integro-differential equation
$$
 \int_{\mathbb R^n} (u(x+y) - 2 u(x) + u(x-y)) K(y)  dy = f(x,u(x))
$$
are locally analytic if $f$ is an analytic function. This extends results in
\cite{Albanese2015} in which it was shown that such solutions belong to
certain Gevrey classes. 
\end{abstract}

\tableofcontents

\section{Introduction}

%
%

Non-local equations play an important role in so different fields as the modeling of 
american option prices, geometric repulsive potential, the propagation of flames, 
and particel physics, where the Boltzman equation and the Kac equation are prominent 
examples of fractional partial differential equations.

Though in  recent years the research on non-local partial differential 
equations exploded, still quite a lot of very basic questions regarding this type 
of equations remain open that have long been settled in the classical setting. In 
this article we address one of these questions: Is the solution to an elliptic 
fractional partial differential equation with analytic right-hand side analytic? 

%
%

For classical non-linear partial differential equations this is David Hilbert's 19th 
problem. Already shortly after, Bernstein  could give an answer in \cite{Bernstein1904} 
for elliptic equations in two independent variables under the  assumption that the solution is already
$C^3$  and by Petrowsky to systems \cite{Petrowsky1939}.  
Different methods of proof and generalization can be found in  \cite{Gevrey1918, Lewy1929, Hopf1932, Friedman1958, Morrey1957,Morrey1958,Morrey1958a}

In recent years some results on analyticity for special fractional equations on the whole space $\mathbb R^n$ 
or compact manifolds like $\mathcal S^1$ appeared \cite{DallAcqua2012,Barbaroux2017,Blatt2018a}.  
To the best of the authors knowledge, the findings in \cite{Albanese2015} are the only attempt to consider 
analyticity of local solutions to general fractional partial differential equations.  They prove that the solution belong 
to certain Gevrey classes but did not succeed in proving that the solutions are indeed analytic.

%
%

Let us formulate the main result of this article. We consider translation invariant kernels $K \in C^\infty( \mathbb R^n \setminus 
\{0\}, (0,\infty)) $ close to a kernel of fractional Laplacian type in the sense that
 \begin{equation} \label{eq:NearFractional}
  \left|\frac{|y|^{n+s}K(y)}{2-s} - a_0 \right| \leq \eta
 \end{equation}
 for all $y \in \mathbb R^n \setminus \{0\}.$ Here, 
$\eta> 0$ is going to be a small constant that will be determined 
later on.

 For such kernels and functions $u \in L^\infty(\mathbb R^n, \mathbb R)$ we define 
the operator
 $$
  K u(x)  = p.v. \int_{\mathbb R^n} (u(x+y) - 2 u(x) + u(x-y)) K(y)  dy.
 $$
We will furthermore assume that the kernel satisfies the estimate
\begin{equation} \label{eq:DerivativesKernel}
|\partial^\alpha_y K(y) | \leq  C \frac { 
H^{|\alpha|}
  |\alpha| !} {|y|^{n+s+|\alpha|}} \quad \text{ on } B_1(0)
\end{equation}
for all multiindices $\alpha \in \mathbb N_0^n$. We will assume without loss of generality that $H \geq 1.$
In this short note we will prove the following result.

\begin{theorem} \label{thm:AnalyticityOfSolutions}
 For $s \in (1,2)$ let us assume that $u \in L^\infty(\mathbb R^n, \mathbb R) \cap C^\infty(B_1(0))$ is a 
 viscosity solution of the equation
 $$
  K u (x) = f(x,u(x))
 $$
 for an analytic function $f:B_1(0) \times \mathbb R \rightarrow \mathbb R$. Then $u$ is analytic on $B_1(0).$
\end{theorem}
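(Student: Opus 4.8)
The strategy is the classical one for Hilbert's nineteenth problem, adapted to the nonlocal setting: reduce analyticity to a quantitative bound on all derivatives of $u$ on a fixed ball, and prove that bound by induction on the order of differentiation, using a quantitative interior estimate for $K$, Faà di Bruno's formula for the derivatives of $x\mapsto f(x,u(x))$, and the method of majorants to close the constants. Concretely, since $u\in C^\infty(B_1(0))\cap L^\infty(\mathbb R^n)$ the viscosity inequalities force $Ku(x)=f(x,u(x))$ to hold classically for every $x\in B_1(0)$, so $u$ is a classical solution there. Fix a point, which after a translation we take to be the origin, and a ball $B_{2r}(0)\subset B_1(0)$. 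It suffices to produce constants $M\ge 1$ and $R\ge 1$ with
\begin{equation*}
 |D^{m}u(x)|\ \le\ M\,R^{m}\,m!\ \operatorname{dist}(x,\partial B_r(0))^{-m}\qquad\text{for all }m\in\mathbb N_0,\ x\in B_r(0),
\end{equation*}
since this gives a positive radius of convergence for the Taylor series of $u$ at $0$, and running the argument at every point of $B_1(0)$ yields analyticity there. As $u$ is already smooth, for each fixed $m$ such a bound holds with some constant; the whole content is the uniformity in $m$, i.e. that $M$ and $R$ can be chosen independently of $m$.

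\emph{(Quantitative interior estimate for $K$.)} The analytic input is a higher-order interior estimate: there are $\sigma\in(0,s-1)$ and $C_0\ge 1$, depending only on $n$, $s$ and the constants $C$ and $H$ in \eqref{eq:DerivativesKernel}, such that whenever $v\in L^\infty(\mathbb R^n)$ satisfies $Kv=g$ on a ball $B_\rho(z)$ with $g$ smooth there, then for every $k\in\mathbb N_0$
\begin{equation*}
 \rho^{\,k+s}\bigl(\rho^{\sigma}[D^{k}v]_{C^{\sigma}(B_{\rho/2}(z))}+\|D^{k}v\|_{L^\infty(B_{\rho/2}(z))}\bigr)
 \ \le\ C_0^{\,k+1}\,k!\,\Bigl(\|v\|_{L^\infty(\mathbb R^n)}+\sum_{j=0}^{k}\tfrac{\rho^{\,j+s}}{C_0^{\,j}j!}\,\|D^{j}g\|_{L^\infty(B_\rho(z))}\Bigr).
\end{equation*}
Two features are essential: the far part of $K$ enters only through the \emph{undifferentiated} quantity $\|v\|_{L^\infty(\mathbb R^n)}$, and the dependence of the constant on the differentiation order is only $C_0^{k}k!$, which is exactly what hypothesis \eqref{eq:DerivativesKernel} provides. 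One obtains this from the interior regularity theory for nonlocal operators close to a multiple of the fractional Laplacian — the smallness of $\eta$ in \eqref{eq:NearFractional} makes $K$ such a perturbation — by differentiating the equation and iterating while tracking constants; each differentiation produces derivatives of the kernel near the diagonal (estimated by \eqref{eq:DerivativesKernel}) and derivatives of $g$.

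\emph{(Induction and majorants.)} Argue by induction on $m$. Given the displayed bound for all orders $\le m-1$ on $B_r(0)$, fix $x\in B_r(0)$, set $d=\operatorname{dist}(x,\partial B_r(0))$, and apply the interior estimate to $v=u$ on $B_\rho(x)$ with the \emph{$m$-dependent} radius $\rho=\theta d/m$ for a small fixed $\theta\in(0,1)$. The term $\rho^{-s}\|u\|_{L^\infty(\mathbb R^n)}$ together with $\rho\approx d/m$ produces, via Stirling, precisely the extra factor $m!$ needed to pass from order $m-1$ to order $m$, while the loss $(1-\theta/m)^{-(m-1)}\to e^{\theta}$ from working on the slightly larger ball stays bounded. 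The forcing term $\sum_j \rho^{\,j+s}\|D^{j}(f(\cdot,u))\|_{L^\infty(B_\rho(x))}/(C_0^{\,j}j!)$ is expanded by Faà di Bruno's formula into sums over partitions of products $\prod_i D^{\gamma_i}u$ times $\partial_x^{\beta}\partial_u^{\ell}f$; analyticity of $f$ gives $|\partial_x^{\beta}\partial_u^{\ell}f|\le C A^{|\beta|+\ell}(|\beta|+\ell)!$, the inductive hypothesis bounds the $D^{\gamma_i}u$, and the resulting combinatorial sums are controlled by the standard majorant estimates attached to Faà di Bruno's formula. Collecting terms, the whole right-hand side is $\le\kappa\,M R^{m}m!\,d^{-m}$ with $\kappa=\kappa(\theta,R,\dots)$; choosing $R$ large first (depending on $n,s,H,A,\theta$ and the fixed constants) and then $M$ large makes $\kappa\le1$, which closes the induction. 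Equivalently one may phrase this step as constructing an explicit scalar majorant function whose Taylor coefficients at the origin dominate $|D^{m}u(0)|/m!$ and reading off its radius of convergence.

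\emph{(Main obstacle.)} The delicate point is the interplay in the last step between the ball–shrinking forced by the order-$s$ interior estimate and the Faà di Bruno combinatorics: the auxiliary radius $\rho$ must be chosen as a function of $m$ so that the factorial produced by $\rho^{-s}$ and the factorials coming from the chain rule combine to exactly $m!$ and no more, and one must check that all constants stay uniform in $m$. The genuinely nonlocal difficulty lies one level below, in establishing the higher-order interior estimate with constants growing only like $C_0^{k}k!$ — this is precisely where \eqref{eq:DerivativesKernel} is used, and where the nonlocal tail has to be shown to contribute only the harmless term $\|u\|_{L^\infty(\mathbb R^n)}$.
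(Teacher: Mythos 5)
Your overall strategy is the same as the paper's (Schauder estimates for $K$, induction on the order of differentiation with an $m$-dependent radius, Faà di Bruno for $f(x,u(x))$, majorants to close the constants), but the argument has a genuine gap exactly at the point you yourself identify as the crux: the ``quantitative interior estimate'' in which the nonlocal tail enters only through the undifferentiated quantity $\|v\|_{L^\infty(\mathbb R^n)}$ with constants $C_0^{k+1}k!$. You propose to obtain it ``by differentiating the equation and iterating while tracking constants,'' but differentiating the equation is precisely what is not available globally: $\partial^k v$ exists only on the ball where $v$ is smooth, while $v$ is merely $L^\infty$ outside, so one cannot apply the Schauder estimate to $\partial^k v$ as a global object. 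The only way to localize is to cut off, write $\partial^k v = w_1 + w_2$ with $w_2$ supported away from the ball, and estimate $Kw_2$ by moving the derivatives one at a time onto the kernel by repeated integration by parts over nested annuli, using \eqref{eq:DerivativesKernel} at each step. This unavoidably produces not just $\|v\|_{L^\infty(\mathbb R^n)}$ but a whole sum of \emph{intermediate} derivatives $\|\nabla^{k-l}v\|$ on a family of growing balls, weighted by $H^l\,l!/(l\sigma)^{l+s}$; absorbing that sum requires the binomial estimate $k^k/((k-l)^{k-l}l^l)\le(2e)^l\binom{k}{l}$ and a recursion of the form $M_{k+1}\le C\bigl(N_k+k\sum_{l}\binom{k}{l}M_{k-l}(2eH)^l l!\bigr)$, which is then closed by majorants. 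In other words, the clean lemma you posit is not an input one can establish separately and then induct on top of --- proving it for all $k$ with the stated constants already amounts to the analyticity estimate itself, so taking it as given makes the subsequent induction essentially circular.

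A secondary, smaller point: even granting your interior estimate, the step where you apply it on $B_{\rho}(x)$ with $\rho=\theta d/m$ needs the equation to hold (and $g=f(\cdot,u(\cdot))$ to be controlled) on the full ball $B_\rho(x)$, and the nested-ball construction in fact consumes room proportional to $k\sigma$, which is why the paper must take $\sigma\sim(1-|x_0|)/(6(k+2))$ and work with the weighted quantities $M_k=\sup_r(1-r)^k\|\nabla^k u\|_{B_r}$ rather than with a pointwise bound at a fixed ball. Your bookkeeping with $\operatorname{dist}(x,\partial B_r(0))^{-m}$ is compatible with this, but only after the interior estimate has been established in its honest form with the intermediate-derivative terms present.
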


Note that in view of the bootstrapping argument in \cite{Barrios2014} the assumption $u \in C^\infty(B_1(0))$ is not essential. In contrast to \cite{Albanese2015} we only consider translation invariant 
equations here. But this is not the reason why the result stated here 
is stronger: Unfortunately some of the additional terms coming from $x$-dependence 
of the kernel seem to be missing in \cite[inequality (3.2)]{Albanese2015} and hence their 
proof seems to at least have a gap. Though we believe that also these additional 
terms can be controlled we leave this case for a later paper as this will be technically 
more involved.

%
%

As in \cite{Albanese2015}, we proof Theorem 
\ref{thm:AnalyticityOfSolutions} combining the classical approaches by Friedman and
Morrey with the a-priori estimates for solution in \cite{Caffarelli2011}. In 
contrast to \cite{Albanese2015} we omit the use of 
incremental differences and discrete partial integration completely and directly 
work with partial derivatives and partial integration. The essential new ingredient in our proof is to estimate the terms coming 
from the long-range interactions of the equation in much more sophisticated way using nested balls.
 
In Section \ref{sec:Preliminaries} we gather some known facts and tools for the proof of Theorem \ref{thm:AnalyticityOfSolutions}, i.e. a characterization of analyticity, the Schauder estimates of Caffarelli and Silvestre in \cite{Caffarelli2011} and an elementary estimate for the binomial. The essential estimate for higher derivative is then derived in Section \ref{sec:APriori} before we turn to the proof of Theorem \ref{thm:AnalyticityOfSolutions} in Sections \ref{sec:Proof1} and \ref{sec:Proof2}. In \ref{sec:Proof1} we give the proof first for the special case that the right-hand side of our equation does not depend on $x$ and not on $u$. We do this for two reasons: To make the presentation  as readable as possible and since this special case contains the major new difficulties. We will then see in Section \ref{sec:Proof2} that one can deal with the $u$-dependence by applying a higher order chain rule in a fairly standard way.

%
%

\section{Preliminaries} \label{sec:Preliminaries}

\subsection{Characterization of Analytic Functions}


The following fact is well known.

\begin{theorem} A function $u:\Omega \rightarrow \mathbb R$ is analytic on $\Omega$, $\Omega \subset \mathbb R^n$ open, 
if and only if for every compact set $K \subset \Omega$ there are constants $C=C_k,A=A_K < 
\infty$ such that 
 $$
  \|\nabla^k u \|_{L^\infty(B_r(x))} \leq C A^k k!
 $$
 for all $k \in \mathbb N_0$.
\end{theorem}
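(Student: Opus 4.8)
The plan is to prove both directions separately, working locally. For the "only if" direction, suppose $u$ is analytic on $\Omega$ and fix a compact $K \subset \Omega$. First I would cover $K$ by finitely many balls on each of which $u$ is given by a convergent power series, so by compactness it suffices to produce the estimate on a single such ball $B_\rho(x_0)$ with a uniform bound. On a slightly larger ball the power series $u(x) = \sum_\alpha c_\alpha (x-x_0)^\alpha$ converges, and by Cauchy–Hadamard-type reasoning (or by Cauchy estimates obtained from extending $u$ to a holomorphic function on a complex neighborhood) there are constants $M, R$ with $|c_\alpha| \leq M R^{-|\alpha|}$. Since $\partial^\beta u(x_0) = \beta!\, c_\beta$, and since for $x$ in a smaller ball one can re-expand the series around $x$ with comparable constants (the radius of convergence is locally bounded below on $K$), one gets $|\partial^\beta u(x)| \leq M' \beta!\, (R')^{-|\beta|}$ uniformly for $x \in K$. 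Converting from the multiindex bound to a bound on $\|\nabla^k u\|_{L^\infty}$ costs only a dimensional factor: summing $|\partial^\beta u|$ over $|\beta| = k$ introduces a factor $n^k$, and $\beta! \leq k!$, so one obtains $\|\nabla^k u\|_{L^\infty(K)} \leq C A^k k!$ with $A = n/R'$ and suitable $C$.

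For the "if" direction, assume the derivative bounds hold on every compact subset, fix $x_0 \in \Omega$, and choose a closed ball $\overline{B_r(x_0)} \subset \Omega$ together with the constants $C, A$ valid on it. The goal is to show the Taylor series of $u$ at $x_0$ converges to $u$ on a neighborhood. I would write Taylor's theorem with the integral (or Lagrange) form of the remainder: for $x \in B_\rho(x_0)$ with $\rho$ small,
$$
u(x) = \sum_{k=0}^{N-1} \frac{1}{k!} D^k u(x_0)[(x-x_0)^{\otimes k}] + R_N(x),
$$
where $R_N(x)$ is controlled by $\sup_{\xi} \|\nabla^N u(\xi)\| \cdot |x - x_0|^N / N!$. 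Plugging in the hypothesis $\|\nabla^N u\| \leq C A^N N!$ gives $|R_N(x)| \leq C (A|x-x_0|)^N$, which tends to $0$ as $N \to \infty$ provided $|x - x_0| < 1/A$. Hence the Taylor series converges to $u$ on $B_{1/A}(x_0) \cap B_r(x_0)$, so $u$ is represented by a convergent power series near each point, i.e. $u$ is analytic.

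The only mildly delicate point is the "only if" direction: one must pass from convergence of the power series at a single center to a uniform-in-$x$ derivative bound over all of $K$, which requires either a compactness/lower-bound-on-radius-of-convergence argument or the standard trick of complexifying and applying the Cauchy integral formula on a fixed-width complex collar of $K$; the bookkeeping converting multiindex Cauchy estimates $|\partial^\beta u| \leq M \beta!\, R^{-|\beta|}$ into the stated $\|\nabla^k u\|_{L^\infty} \leq C A^k k!$ is then just the elementary inequalities $\sum_{|\beta|=k} 1 \leq (k+1)^n \leq \tilde{C} (2n)^k$ and $\beta! \leq k!$. The "if" direction is entirely routine given Taylor's theorem with remainder. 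Since the excerpt itself labels this "well known," I would keep the write-up brief, citing a standard reference (e.g. Krantz–Parks, \emph{A Primer of Real Analytic Functions}) for the details and only sketching the remainder estimate.
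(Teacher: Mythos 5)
Your sketch is correct and follows the standard argument (Taylor's theorem with remainder for the ``if'' direction, Cauchy estimates plus a compactness/uniform-radius argument for the ``only if'' direction); the paper gives no proof of its own and simply cites Krantz, where exactly this argument appears. The only nitpick is that the multivariate Lagrange remainder carries an extra factor of order $n^N$ from summing over multiindices of length $N$, so the Taylor series converges for $|x-x_0|<1/(nA)$ rather than $1/A$ --- immaterial for the conclusion.
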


A proof of this theorem can be found in \cite{Krantz1992}.

\subsection{A-Priori Estimates for Non-Local Integro-Differential Operators}


 Caffarelli and Silvestre proved the following remarkable theorem.

\begin{theorem}[\protect{\cite[Theorem 61]{Caffarelli2011}}] 
\label{thm:SchauderEstimate}
Let $s \in (1,2)$ and $u \in L^\infty(\mathbb R^n)$ be a viscosity solution of 
  $$
   Ku (x)  = f(x) \text { on } B_1(0)
  $$
  for an $f \in L^\infty(B_1(0))$ and let $\eta>0$ in \eqref{eq:NearFractional} be small enough. 
  Then for all $0 < \alpha < 1-s$ we have $u \in C^{1,\alpha}(B_{\frac 1 2}(0))$
  and
  $$ 
   \|u\|_{C^{1,\alpha} (B_{\frac12}(0))}
   \leq C\left(\|f\|_{L^\infty(B_1(0))} 
   +  \|u\|_{L^\infty(\mathbb R^n)}\right).
  $$ 
  \end{theorem}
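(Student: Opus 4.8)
The plan is to use the \emph{regularity by approximation} scheme. Condition \eqref{eq:NearFractional} with $\eta$ small says that $K$ is a small perturbation of a constant multiple of the fractional Laplacian $(-\Delta)^{s/2}$, whose interior estimates are classical: if $(-\Delta)^{s/2}h=0$ in $B_r(x_0)$ then $h$ is $C^\infty$ in $B_{r/2}(x_0)$ and $\|h-\ell_{x_0}\|_{L^\infty(B_\rho(x_0))}\le C\|h\|_{L^\infty(\mathbb R^n)}(\rho/r)^{1+\gamma}$ for every $\gamma\in(0,s-1)$, where $\ell_{x_0}$ is the first--order Taylor polynomial of $h$ at $x_0$. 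The idea is to show that $u$ is, on every small ball, $L^\infty$--close to such an $h$, and then to iterate this on dyadic balls. As a preliminary ingredient I would invoke the nonlocal Krylov--Safonov theory: since \eqref{eq:NearFractional} with $\eta$ small makes $K$ uniformly elliptic (squeezed between two multiples of the fractional Laplacian kernel), any viscosity solution of $Ku=f$ in $B_1(0)$ satisfies $\|u\|_{C^\beta(B_{3/4}(0))}\le C(\|f\|_{L^\infty(B_1(0))}+\|u\|_{L^\infty(\mathbb R^n)})$ for some $\beta=\beta(n,s)>0$, which supplies the equicontinuity needed below.

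The heart of the argument is the approximation lemma: for every $\varepsilon>0$ there are $\eta_0>0$ and $\delta_0>0$, depending only on $n,s,\varepsilon$, so that if $\eta\le\eta_0$ and $Ku=f$ in $B_1(0)$ with $\|u\|_{L^\infty(\mathbb R^n)}\le1$ and $\|f\|_{L^\infty(B_1(0))}\le\delta_0$, then there is $h$ with $(-\Delta)^{s/2}h=0$ in $B_{3/4}(0)$, $\|h\|_{L^\infty(\mathbb R^n)}\le1$, and $\|u-h\|_{L^\infty(B_{3/4}(0))}\le\varepsilon$. I would prove this by compactness: were it false, there would be kernels $K_j$ as in \eqref{eq:NearFractional} with $\eta_j\to0$ and viscosity solutions $u_j$ of $K_ju_j=f_j$ in $B_1(0)$ with $\|u_j\|_{L^\infty(\mathbb R^n)}\le1$, $\|f_j\|_{L^\infty(B_1(0))}\to0$, but $\|u_j-h\|_{L^\infty(B_{3/4}(0))}>\varepsilon$ for every admissible $h$; by the $C^\beta$ estimate and Arzel\`a--Ascoli a subsequence of $u_j$ converges locally uniformly (and, thanks to $\|u_j\|_{L^\infty(\mathbb R^n)}\le1$, in the tail sense the nonlocal operator requires) to some $u_\infty$, while $\eta_j\to0$ forces $K_j(y)\to a_0(2-s)|y|^{-n-s}$, so stability of viscosity solutions gives $(-\Delta)^{s/2}u_\infty=0$ in $B_{3/4}(0)$ with $\|u_\infty\|_{L^\infty(\mathbb R^n)}\le1$; then $h=u_\infty$ contradicts the failure.

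Finally I would iterate. Fix $\alpha\in(0,s-1)$, choose $\gamma\in(\alpha,s-1)$, then $\rho\in(0,1)$ so small that $C\rho^{1+\gamma}\le\tfrac12\rho^{1+\alpha}$, and apply the approximation lemma with $\varepsilon=\tfrac12\rho^{1+\alpha}$, fixing $\eta_0,\delta_0$. After dividing by the sup--norm of the data one may assume $\|u\|_{L^\infty(\mathbb R^n)}\le1$ and $\|f\|_{L^\infty(B_1(0))}\le\delta_0$, and because $\alpha<s-1$ the rescaled problems stay in this regime: for $x_0\in B_{1/2}(0)$ and affine $\ell_k$, the function $x\mapsto\rho^{-k(1+\alpha)}(u(x_0+\rho^kx)-\ell_k(x_0+\rho^kx))$ solves $\tilde K\tilde u=\tilde f$ with $\tilde K$ satisfying \eqref{eq:NearFractional} (same $a_0,\eta$) and $\|\tilde f\|_{L^\infty(B_1(0))}\le\rho^{k(s-1-\alpha)}\|f\|_{L^\infty(B_1(0))}\le\delta_0$. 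Applying the lemma and then the interior estimate for $s$--harmonic functions at scale $\rho$ produces affine $\ell_{k+1}$ with $\|u-\ell_{k+1}\|_{L^\infty(B_{\rho^{k+1}}(x_0))}\le\rho^{(k+1)(1+\alpha)}$ and $|\nabla\ell_{k+1}-\nabla\ell_k|\le C\rho^{k\alpha}$; summing the geometric series gives $\nabla\ell_k\to\nabla u(x_0)$ with the $C^{1,\alpha}$ modulus, and taking the supremum over $x_0\in B_{1/2}(0)$ yields the stated bound.

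I expect the main difficulties to be: (i) the preliminary Krylov--Safonov $C^\beta$ estimate, itself a substantial piece of nonlocal De Giorgi/Harnack theory; (ii) making the compactness step rigorous --- passing the singular kernels $K_j$ to the limit and preserving the viscosity (super/sub)solution property there, which relies on the uniform control of the far field furnished by $\|u_j\|_{L^\infty(\mathbb R^n)}\le1$; and (iii) the nonlocal bookkeeping in the iteration, where one must keep track that $u-\ell_k$ grows at most linearly outside $B_{\rho^k}(x_0)$ so that its contribution to $K$ through the tail stays controlled after each rescaling. The smallness thresholds are consistent: $\eta$ is small by hypothesis, and $\delta_0$ is met by the normalization together with the geometric decay $\rho^{k(s-1-\alpha)}$ of the rescaled right--hand sides, which is precisely where $\alpha<s-1$ enters and fixes the optimal exponent.
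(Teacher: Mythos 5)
This theorem is not proved in the paper at all: it is quoted verbatim from Caffarelli--Silvestre \cite[Theorem 61]{Caffarelli2011}, and your sketch (Krylov--Safonov $C^\beta$ estimate, compactness approximation by $s$-harmonic functions, iteration on dyadic balls) is precisely the ``regularity by approximation'' scheme of that reference, so your approach is essentially the same as the source's. One small point: you correctly work with $\alpha \in (0,s-1)$, which silently fixes the typo in the statement, where ``$0<\alpha<1-s$'' is vacuous for $s\in(1,2)$ and should read $0<\alpha<s-1$.
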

 
 
 Scaling this result, we immediately get the following.
 
 \begin{theorem} \label{thm:ScaledSchauderEstimate}
  Let $s \in (1,2)$ and $u \in L^\infty(\mathbb R^n)$ solve 
  $$
   Ku (x) = f(x) \text{ in } B_{r}(0)
  $$
  for an $f \in L^\infty(B_r(0))$ and let $\eta>0$ in \eqref{eq:NearFractional} be small enough.
  Then for all $0 < \alpha < 1-s$ we have $u \in 
C^{1,\alpha}(B_{\frac r 2 }(0))$
  and
  $$
   r\|\nabla u\|_{L^\infty (B_{\frac r2}(0))} + 
r^{1+\alpha} \hol_{\alpha, B_{\frac r2}(0)} (\nabla  u)
   \leq C\left(r^{s}\|f\|_{L^\infty(B_1(0))} 
   +  \|u\|_{L^\infty(\mathbb R^n)}\right).
  $$
 \end{theorem}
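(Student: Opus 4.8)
The plan is to deduce the scaled estimate from Theorem~\ref{thm:SchauderEstimate} by the standard blow-up substitution. Given $u$ solving $Ku = f$ in $B_r(0)$, I would introduce the rescaled function $v(x) := u(rx)$, defined on all of $\mathbb R^n$ (the global boundedness is preserved since $\|v\|_{L^\infty(\mathbb R^n)} = \|u\|_{L^\infty(\mathbb R^n)}$), and check how the operator $K$ transforms. Since $K$ is built from the kernel $K(y)$ which is homogeneous of degree $-(n+s)$ up to the controlled perturbation in \eqref{eq:NearFractional}, a change of variables $y \mapsto ry$ in the principal-value integral gives
$$
 Kv(x) = p.v.\int_{\mathbb R^n}\bigl(u(rx+ry) - 2u(rx) + u(rx-ry)\bigr)K(y)\,dy
 = r^{s}\,(Ku)(rx),
$$
because $K(ry)\,dy = r^{-(n+s)}K(y)\cdot r^{n}\,dy \cdot r^{s} \cdot$ — more carefully, $K(ry) = r^{-(n+s)}\widetilde K(y)$ where $\widetilde K$ is a kernel of the same type (it satisfies \eqref{eq:NearFractional} with the same $a_0,\eta$ and \eqref{eq:DerivativesKernel} with the same constants, since these are scale-invariant), so substituting $z = ry$ yields the factor $r^{s}$. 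Hence $v$ solves $\widetilde K v(x) = r^{s} f(rx) =: g(x)$ in $B_1(0)$, with $g \in L^\infty(B_1(0))$ and $\|g\|_{L^\infty(B_1(0))} = r^{s}\|f\|_{L^\infty(B_r(0))}$.

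Now I apply Theorem~\ref{thm:SchauderEstimate} to $v$ and the kernel $\widetilde K$ (which meets the smallness hypothesis on $\eta$), obtaining $v \in C^{1,\alpha}(B_{1/2}(0))$ with
$$
 \|\nabla v\|_{L^\infty(B_{1/2}(0))} + \hol_{\alpha,B_{1/2}(0)}(\nabla v)
 \leq C\bigl(\|g\|_{L^\infty(B_1(0))} + \|v\|_{L^\infty(\mathbb R^n)}\bigr)
 = C\bigl(r^{s}\|f\|_{L^\infty(B_r(0))} + \|u\|_{L^\infty(\mathbb R^n)}\bigr).
$$
It remains to translate the left-hand side back to $u$. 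From $v(x) = u(rx)$ the chain rule gives $\nabla v(x) = r\,(\nabla u)(rx)$, so $\|\nabla v\|_{L^\infty(B_{1/2}(0))} = r\,\|\nabla u\|_{L^\infty(B_{r/2}(0))}$; and for the Hölder seminorm, $[\nabla v]_{\alpha,B_{1/2}(0)} = \sup \frac{|\nabla v(x)-\nabla v(x')|}{|x-x'|^\alpha} = r\cdot r^{\alpha}\,[\nabla u]_{\alpha,B_{r/2}(0)} = r^{1+\alpha}\,\hol_{\alpha,B_{r/2}(0)}(\nabla u)$, using $|rx - rx'|^{\alpha} = r^{\alpha}|x-x'|^{\alpha}$. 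Combining these identities with the displayed estimate for $v$ gives exactly the claimed inequality, and $u \in C^{1,\alpha}(B_{r/2}(0))$ follows from $v \in C^{1,\alpha}(B_{1/2}(0))$.

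The only genuinely non-routine point — and the step I would state carefully — is the scaling behaviour of the operator: one must verify that the rescaled kernel $\widetilde K(y) = r^{n+s}K(ry)$ again satisfies \eqref{eq:NearFractional} and \eqref{eq:DerivativesKernel} with unchanged constants $a_0$, $\eta$, $C$, $H$, so that Theorem~\ref{thm:SchauderEstimate} applies to $v$ with a constant $C$ independent of $r$. For \eqref{eq:NearFractional} this is immediate since $|y|^{n+s}K(y)$ is scale-invariant; for \eqref{eq:DerivativesKernel} the extra factor $r^{|\alpha|}$ from differentiating $K(ry)$ is exactly compensated by the $r^{n+s}$ prefactor against the $|ry|^{n+s+|\alpha|}$ in the denominator, after noting the estimate need only hold on $B_1(0)$ for $\widetilde K$ (which corresponds to $B_r(0)$ for $K$, a subset of $B_1(0)$ once $r \le 1$; for $r > 1$ one localises first). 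Everything else is the bookkeeping of powers of $r$ carried out above.
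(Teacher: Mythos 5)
Your proof is correct and is exactly the argument the paper intends: the paper offers no details beyond ``Scaling this result, we immediately get the following,'' and your rescaling $v(x)=u(rx)$, $\widetilde K(y)=r^{n+s}K(ry)$, together with the verification that \eqref{eq:NearFractional} is scale-invariant so that Theorem~\ref{thm:SchauderEstimate} applies with an $r$-independent constant, is precisely that scaling argument spelled out.
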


 \subsection{An Estimate for the Binomial}  We will need the following estimate for the binomial.
 
 \begin{lemma} \label{lem:Binomial}
  We have
  $$
   \frac{k^k}{(k-l)^{k-l} l^l } \leq (2e)^l \binom kl
  $$
  for all $k \in \mathbb N$, $k>l>0$.
 \end{lemma}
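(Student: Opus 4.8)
The plan is to rewrite the claimed inequality in the equivalent form
$$
\frac{k^k}{(k-l)^{k-l}\, l^l} \cdot \frac{(k-l)!\, l!}{k!} \le (2e)^l,
$$
and to estimate the two ratios $k^k/(k-l)^{k-l}$ and $l!/l^l$ (together with $(k-l)!/(k-l)^{k-l}$, or rather its reciprocal) separately using elementary bounds for the factorial. The natural tool is the crude two-sided Stirling-type estimate $m^m e^{-m} \le m! \le m^m e^{-m} \cdot e\sqrt{m}$, or even just the elementary bounds $m!\le m^m$ and $m! \ge (m/e)^m$ which follow by comparing $\log m!=\sum_{j\le m}\log j$ with $\int_1^m \log t\,dt$ (equivalently, from the power series of $e^m$). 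These give $l!\le l^l$ directly, so $l!/l^l\le 1$, and $(k-l)! \ge ((k-l)/e)^{k-l}$, so $(k-l)!/(k-l)^{k-l} \ge e^{-(k-l)}$; for the numerator $k!\le k^k$, hence $k^k/k! \ge 1$, but that points the wrong way, so instead I would use $k! \ge (k/e)^k$ giving $k^k/k! \le e^k$.

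Carrying this out: from $k^k \le e^k\, k!$, from $l! \le l^l$, and from $(k-l)^{k-l} \ge (k-l)!\, e^{-(k-l)} \cdot$ (nothing extra needed — we want a \emph{lower} bound on $(k-l)^{k-l}$, and $(k-l)^{k-l}\ge (k-l)!$ is the easy direction), we would get
$$
\frac{k^k}{(k-l)^{k-l}\, l^l} \le \frac{e^k\, k!}{(k-l)!\, l!} \cdot \frac{l!}{l^l} \cdot \frac{(k-l)!}{(k-l)^{k-l}} \le e^k \binom{k}{l} \cdot 1 \cdot 1,
$$
which only yields $e^k$, not $(2e)^l$ — too weak when $l$ is much smaller than $k$. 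So the crude bounds are insufficient, and the real content is to get a factor $e^l$ rather than $e^k$. The fix is to be more careful with the \emph{numerator}: write $k^k = (k-l)^k \cdot \bigl(\tfrac{k}{k-l}\bigr)^k$ and note $\bigl(1+\tfrac{l}{k-l}\bigr)^k \le \bigl(1+\tfrac{l}{k-l}\bigr)^{k} $; using $(1+x)^{1/x}\le e$ with $x=l/(k-l)$ gives $\bigl(1+\tfrac{l}{k-l}\bigr)^{(k-l)} \le e^{l}$, and there is a leftover factor $\bigl(1+\tfrac{l}{k-l}\bigr)^{l}=\bigl(\tfrac{k}{k-l}\bigr)^l \le \bigl(\tfrac{k}{1}\bigr)^l$ in the worst case — again not obviously bounded. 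The cleaner route: write
$$
\frac{k^k}{(k-l)^{k-l} l^l} = \binom{k}{l}\cdot \frac{k^k\, (k-l)!\, l!}{(k-l)^{k-l}\, l^l\, k!} = \binom{k}{l}\cdot \underbrace{\frac{k^k}{k!}}_{}\cdot\underbrace{\frac{(k-l)!}{(k-l)^{k-l}}}_{}\cdot\underbrace{\frac{l!}{l^l}}_{},
$$
and now apply the two-sided estimate $\sqrt{2\pi m}\,(m/e)^m \le m! \le e\sqrt{m}\,(m/e)^m$ to \emph{all three} factorial ratios. The $e^{\pm m}$ contributions combine as $e^{k}\cdot e^{-(k-l)}\cdot e^{-l}=e^{0}=1$ — they cancel exactly — and what survives is a product of polynomial (square-root) prefactors, namely something like $\frac{e\sqrt k}{\sqrt{2\pi k}}\cdot \frac{e\sqrt{k-l}}{\sqrt{2\pi(k-l)}}\cdot\frac{e\sqrt l}{\sqrt{2\pi l}} \le C\sqrt{\tfrac{k-l}{k}}\cdot\sqrt l \le C\sqrt l \le 2^l$ for suitable absolute constant, which is at most $(2e)^l/e^l \le 2^l$. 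Thus the $(2e)^l$ (in fact already $2^l$, leaving room for the extra $e^l$) absorbs the polynomial factors. The main obstacle is precisely this bookkeeping: one must arrange the Stirling bounds so that the exponential factors cancel identically and only a harmless power of $l$ (or of $2$) remains, and then check that this residual power is dominated by $(2e)^l$ for all $k>l>0$ including the boundary cases $l=k-1$ and small $k$; these finitely many edge cases, if they do not follow from the general estimate, are checked by hand.
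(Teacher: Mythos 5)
Your final route is correct and genuinely different from the paper's. You factor the quantity as $\binom{k}{l}\cdot\frac{k^k}{k!}\cdot\frac{(k-l)!}{(k-l)^{k-l}}\cdot\frac{l!}{l^l}$ and apply two-sided Stirling bounds to all three factorial ratios, so that the exponential contributions cancel identically ($e^{k}\cdot e^{-(k-l)}\cdot e^{-l}=1$) and only the prefactor $\frac{e^2\sqrt{(k-l)l}}{\sqrt{2\pi k}}\le \frac{e^2}{\sqrt{2\pi}}\sqrt{l}$ survives, which is indeed below $(2e)^l$ for every $l\ge 1$; this makes any case distinction unnecessary. The paper argues entirely without Stirling: for $l\le k/2$ it combines the elementary bounds $\binom{k}{l}\ge 2^{-l}k^l/l^l$ and $\bigl(1+\tfrac{l}{k-l}\bigr)^{k-l}\le e^l$ (which is exactly the identity $\frac{k^k}{(k-l)^{k-l}l^l}=\bigl(1+\tfrac{l}{k-l}\bigr)^{k-l}\frac{k^l}{l^l}$ that your first "fix" was groping toward), and for $l>k/2$ it exploits the symmetry of the left-hand side under $l\mapsto k-l$ together with $\binom{k}{k-l}=\binom{k}{l}$ and $(2e)^{k-l}\le(2e)^l$. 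Your version buys a cleaner, case-free argument at the price of invoking the lower Stirling bound $m!\ge\sqrt{2\pi m}\,(m/e)^m$; the paper's is slightly longer but uses nothing beyond $(1+x)^{1/x}\le e$ and $l!\le l^l$.

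Two corrections to your write-up. First, the displayed product of prefactors is garbled: the factor coming from $k^k/k!$ is $e^k/\sqrt{2\pi k}$, with the square root in the \emph{denominator}, so the surviving product is $\frac{1}{\sqrt{2\pi k}}\cdot e\sqrt{k-l}\cdot e\sqrt{l}$, not what you wrote (your expression would collapse to an absolute constant). Second, the intermediate claim $C\sqrt{l}\le 2^l$ is false at $l=1$ for the relevant constant $C=e^2/\sqrt{2\pi}\approx 2.95$; you should compare directly against the full budget $(2e)^l$, for which $C\sqrt{l}\le (2e)^l$ does hold for all $l\ge 1$ (e.g.\ since $(2e)^l\ge 2e\,l\ge C\sqrt{l}$). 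With these repairs the argument is complete and no separate edge-case check is needed.
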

 
 \begin{proof}
  For $ 0< l \leq \frac k2$ we have  
$$
 \binom{k}{l} \geq 2^{-l} \frac{k^l}{l^l}
$$
and 
$$
 \frac{k^k}{(k-l)^{k-l} l^l} =\left( \frac k {k-l} \right) ^{k-l}\frac{k^l}{l^l} = 
\left( 
1 +  \frac l {k-l} \right) ^{k-l}\frac{k^l}{l^l} \leq e^l \frac{ k^l} {l^l}.
$$
Hence,
$$
 \frac{k^k}{(k-l)^{k-l}l^l} \leq (2e)^l \binom k l 
$$
if $l \leq \frac k 2.$ For $l> \frac k2 $ we get applying the above to $k-l$ instead 
of $k$
$$
 \frac{k^k}{(k-l)^{k-1} l^l} \leq (2e)^{k-l} \binom k {k-l} \leq (2e)^l \binom k 
{l}. 
$$
 \end{proof}

 
 \section{The Essential A-Piori Estimate}  \label{sec:APriori}
  
 We use the estimates of Caffarelli and Silvestre to derive the following 
recursive estimate for derivatives of higher order. To shorten notation we 
use the shortcuts $B_R = B_R(0)$ and 
$
\|u\|_{A} = \|u\|_{L^\infty (A)}
$
for a subset $A \subset \mathbb R^n. $ Furthermore, we will use
$$
 \|\nabla ^k u\|_{A}:= \sup_{|\alpha| =k} \|\partial^{\alpha} u\|_{A}.
$$

 \begin{theorem} \label{thm:ScaledHigherOrderEstimate}
  Let $u \in L^\infty(\mathbb R^n) \cap C^\infty(\Omega)$ and $f:\Omega 
\rightarrow \mathbb R$ be smooth such that 
  $$
    Ku = f \text{ on } \Omega.
  $$ If $x_0 \in \Omega$, $\sigma>0$. and $k\in \mathbb N$ are chosen such that 
$B_{6\sigma(k+1)}(x_0) \subset 
 \Omega$, then 
  \begin{multline*}
  \sigma \| \nabla^{k+1} u \|_{B_{\sigma}(x_0)} \leq C \bigg( \sigma^s 
  \|\nabla ^k f\|_{B_{2\sigma}(x_0)} + \|\nabla^k u\|_{B_{4r}(x_0)} \\ + \sigma^s 
\sum_{l=1}^{k-1} \frac {H^l l! 
\|\nabla^{k-l} u 
\|_{B_{6l\sigma+2\sigma} (x_0)}}{(6l\sigma)^{l+s}} + \sigma^s \frac {H^k 
k! \| u 
\|_{\mathbb R^n}} {(6k\sigma)^{k+s}} \bigg) 
  \end{multline*}
 \end{theorem}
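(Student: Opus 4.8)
The plan is to apply the scaled Schauder estimate of Theorem~\ref{thm:ScaledSchauderEstimate} not to $u$ itself but to a $k$-th order partial derivative $w = \partial^\alpha u$ with $|\alpha| = k$, on the ball $B_{2\sigma}(x_0)$. Differentiating the equation $Ku = f$ formally $k$ times gives $K w = \partial^\alpha f$ \emph{inside} $B_{2\sigma}(x_0)$, but only if we are allowed to differentiate under the principal-value integral; since $K$ is translation invariant and $u$ is smooth on $\Omega$, this is legitimate as long as we localize. The subtlety is that $w = \partial^\alpha u$ is \emph{not} globally bounded on $\mathbb R^n$ — only on $\Omega$ — so $Kw$ is not literally defined by the same integral. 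To fix this I would split $u = u_{\mathrm{in}} + u_{\mathrm{out}}$, where $u_{\mathrm{in}} = u \cdot \chi$ for a cutoff $\chi$ supported in a slightly larger ball and equal to $1$ near $x_0$, so that $\partial^\alpha u_{\mathrm{in}} \in L^\infty(\mathbb R^n)$ satisfies a perturbed equation $K(\partial^\alpha u_{\mathrm{in}}) = \partial^\alpha f - K(\partial^\alpha u_{\mathrm{out}}) =: g$ on $B_{2\sigma}(x_0)$, and then apply Theorem~\ref{thm:ScaledSchauderEstimate} with $r = 2\sigma$ to get
$$
 \sigma \|\nabla^{k+1} u\|_{B_\sigma(x_0)} \le C\big( \sigma^s \|g\|_{B_{2\sigma}(x_0)} + \|\partial^\alpha u_{\mathrm{in}}\|_{\mathbb R^n}\big).
$$
The second term is controlled by $\|\nabla^k u\|_{B_{4\sigma}(x_0)}$ (using properties of $\chi$ and $\sigma$-dependent bounds on $\nabla\chi$, which is where the intermediate-order terms will also partly come from — so in practice I expect the cutoff to be reshuffled into the long-range sum rather than handled separately), and the first term splits as $\sigma^s\|\nabla^k f\|_{B_{2\sigma}(x_0)}$ plus the contribution of $K$ acting on the part of $\partial^\alpha u$ supported away from $B_{2\sigma}(x_0)$.

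The heart of the argument is estimating this tail term $K(\partial^\alpha u_{\mathrm{out}})(x)$ for $x \in B_{2\sigma}(x_0)$. Here I would use the nested-balls idea announced in the introduction: write the region $\{|y| \ge \sigma\}$ (say) as a union of dyadic-in-$l$ annuli $A_l = B_{6(l+1)\sigma}(x_0) \setminus B_{6l\sigma}(x_0)$ for $l = 1, \dots, k-1$, together with the far region $\{|y| \ge 6k\sigma\}$. On $A_l$, instead of bounding $\partial^\alpha u(x \pm y)$ by the global sup, integrate by parts $l$ times in $y$, moving $l$ of the derivatives off $u$ and onto the kernel $K$; each such transfer costs a factor governed by \eqref{eq:DerivativesKernel}, giving $|\partial^l_y K(y)| \le C H^l l! / |y|^{n+s+l} \lesssim C H^l l! / (6l\sigma)^{n+s+l}$ on $A_l$, and replaces $\|\nabla^k u\|$ by $\|\nabla^{k-l}u\|_{B_{6l\sigma+2\sigma}(x_0)}$. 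Multiplying by the measure of $A_l$, which is of order $(l\sigma)^n$, the powers of $|y|$ and the factor $l^n$ combine to leave exactly $\sigma^s H^l l! \, \|\nabla^{k-l}u\|_{B_{6l\sigma+2\sigma}(x_0)} / (6l\sigma)^{l+s}$ — the $l$-th summand in the claimed bound. On the far region $\{|y| \ge 6k\sigma\}$ there is no room to transfer $k$ derivatives while staying inside $\Omega$, so one transfers all $k$ of them onto $K$ and bounds the remaining $u$ by the global $L^\infty$ norm, producing the last term $\sigma^s H^k k! \|u\|_{\mathbb R^n} / (6k\sigma)^{k+s}$. The condition $B_{6\sigma(k+1)}(x_0) \subset \Omega$ is precisely what guarantees that every ball $B_{6l\sigma + 2\sigma}(x_0)$ appearing above lies in $\Omega$, so that $\|\nabla^{k-l}u\|$ on it is finite.

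The main obstacle I anticipate is \emph{bookkeeping the integration by parts cleanly}: one must introduce, on each annulus $A_l$, a smooth partition-of-unity factor to cut off the $y$-integral to that annulus, and the derivatives of these cutoffs (which scale like $(l\sigma)^{-j}$) must be absorbed without spoiling the $H^l l!$ growth — this is the kind of combinatorial estimate where the constants $6$, the nesting rate, and the ``$+2\sigma$'' slack in $B_{6l\sigma+2\sigma}(x_0)$ are tuned so everything closes. A secondary technical point is justifying that $Kw = \partial^\alpha f$ holds in the viscosity (or, given $u \in C^\infty(\Omega)$, the classical/pointwise) sense after the cutoff decomposition, i.e. that differentiating under the principal-value integral is legitimate; since $s \in (1,2)$ the singularity $|y|^{-n-s}$ is non-integrable near $0$ but the symmetrized second difference $u(x+y) - 2u(x) + u(x-y) = O(|y|^2)$ tames it, and the same symmetrization survives each $y$-integration by parts, so this is routine but needs to be stated. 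Once the recursive inequality is in hand, the passage to analyticity — choosing $\sigma \sim 1/k$ and running an induction on $k$ with a suitable ansatz $\|\nabla^k u\| \le C A^k k!$, using Lemma~\ref{lem:Binomial} to handle the convolution-type sum over $l$ — is deferred to Sections~\ref{sec:Proof1} and~\ref{sec:Proof2}.
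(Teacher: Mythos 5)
Your overall strategy is the paper's: localize, apply Theorem~\ref{thm:ScaledSchauderEstimate} to a $k$-th order derivative, and control the long-range contribution by repeatedly integrating by parts over nested balls, trading derivatives on $u$ for derivatives on $K$ at the cost $H^l l!/|y|^{n+s+l}$ from \eqref{eq:DerivativesKernel}. But two of your concrete choices would not deliver the estimate as stated. First, you cut off $u$ itself ($u_{\mathrm{in}}=u\chi$) and then take $k$ derivatives, so $\|\partial^\alpha u_{\mathrm{in}}\|_{\mathbb R^n}$ contains the full Leibniz expansion $\sum_j \binom{k}{j}\|\nabla^j\chi\|\,\|\nabla^{k-j}u\|$ with $\|\nabla^j\chi\|\sim\sigma^{-j}$. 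A term $\binom{k}{j}\sigma^{-j}\|\nabla^{k-j}u\|$ cannot be absorbed into the claimed summand $\sigma^s H^j j!\|\nabla^{k-j}u\|/(6j\sigma)^{j+s}= H^j j!(6j)^{-j-s}\sigma^{-j}\|\nabla^{k-j}u\|$, since that would require $\binom{k}{j}(6j)^j/j!\approx\binom{k}{j}(6e)^j\lesssim H^j$ uniformly in $k$, which is false. The paper avoids this entirely by inserting the cutoff one level down: it writes $w=\partial_{i_k}(\eta\,\partial_{i_{k-1}\cdots i_1}u)+\partial_{i_k}((1-\eta)\partial_{i_{k-1}\cdots i_1}u)=w_1+w_2$, so that exactly one derivative ever hits $\eta$ and $\|w_1\|_{\mathbb R^n}\le\|\nabla^k u\|_{B_{4\sigma}}+C\sigma^{-1}\|\nabla^{k-1}u\|_{B_{4\sigma}}$, the second term being precisely the $l=1$ summand of the theorem.

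Second, the obstacle you correctly single out as the main one --- introducing a partition of unity on each annulus $A_l$ and integrating by parts $l$ times against it --- is a genuine problem, not mere bookkeeping: a fixed compactly supported cutoff rescaled to $A_l$ does not satisfy analytic-type bounds $|\partial^j\psi_l|\le (C/(l\sigma))^j j!$ for all $j\le l$ with a $k$- and $l$-independent $C$ (compactly supported analytic functions do not exist, and Ehrenpreis--H\"ormander cutoffs would reintroduce exactly the combinatorics you are trying to avoid). The paper dispenses with $y$-cutoffs altogether: it integrates by parts over the exterior regions $\mathbb R^n\setminus B_{6l\sigma}$, keeping the explicit surface integrals over $\partial B_{6l\sigma}$ as boundary terms; each step produces the annulus contribution, a sphere contribution of the same size, and the next tail $J_{l+1}$, and the iteration terminates after $k$ steps with $H^kk!\|u\|_{\mathbb R^n}/(6k\sigma)^{k+s}$. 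Finally, note that $K(\partial^\alpha u_{\mathrm{out}})$ is not defined as you write it, since $u$ is only $L^\infty$ outside $\Omega$; the paper first proves the estimate for $u\in C_c^\infty(\mathbb R^n)$ and then removes this hypothesis by an approximation $u_m=u$ on $B_m$, checking that $\|\nabla^k Ku_m\|_{B_{2\sigma}}\to\|\nabla^k f\|_{B_{2\sigma}}$. With these three repairs your argument becomes the paper's proof.
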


 \begin{proof}
 After a suitable translation we can assumen that $x_0=0$. 
 We first show the statement of the theorem under the addition assumption that $u$ is 
$C^\infty$ on the complete space $\mathbb R^n$ and has compact support.
For that we chose $\tilde \eta \in C^{\infty}(\mathbb R^n,[0,1])$ such that 
 $$
  \tilde \eta \equiv 1 \text{ on } B_3  \quad \text{and} \quad \tilde 
\eta \equiv 0 \text{ on } \mathbb R^n \setminus B_4 
 $$
 and set
 $$
  \eta(x) = \tilde \eta (\tfrac x \sigma).
 $$
 For $k \in \mathbb N$ and $i_1,i_k \in \{0, \ldots, n\}$ we decompose
 $$
  w= \partial_{i_k, \ldots , i_1} u = \partial_{i_k} (\eta \partial_{i_{k-1} 
\ldots, i_1} u)  + \partial_{i_k} ((1 - \eta) \partial_{i_{k-1} 
\ldots, i_1} u)  = w_1 + w_2.
 $$
 Applying Theorem \ref{thm:ScaledSchauderEstimate} we get
 \begin{equation}\label{eq:APriori}
  \sigma \|\nabla \partial_{i_1, \ldots , i_k} u\|_{B_\sigma(0)} \leq 
  C \left( \sigma^s\|Kw_1\|_{B_{2\sigma}}  + \|w_1\|_{\mathbb R^n} 
\right) 
 \end{equation}
 We first note that
 \begin{equation}\label{eq:W1}
 \begin{aligned}
  \|w_1\|_{\mathbb R^n} &= \|\partial_{i_k} (\eta \partial_{i_{k-1} 
\ldots, i_1} u) \|_{\mathbb R^n} \leq \|\nabla^k u\|_{B_{4\sigma}} + 
\|\nabla \eta \|_{\mathbb R^n} \| 
\nabla^{k-1} u\|_{B_{4\sigma}} 
\\
&\leq \|\nabla^k u\|_{B_{4\sigma}} + \frac C \sigma  \| 
\nabla^{k-1} u\|_{B_{4\sigma}}.
 \end{aligned}
 \end{equation}
 To estimate the first term in \eqref{eq:APriori}, we use $w_1 = w - w_2$ to get
 \begin{equation} \label{}
  \|Kw_1\|_{B_{2\sigma}} \leq \|Kw\|_{ B_{2\sigma}} + 
\|Kw_2\|_{B_ {2\sigma} } \leq \|\nabla^k f\|_{B_{2r}} + \|Kw_2\|_{B_ {2\sigma} } .
 \end{equation}
and observe that for $x \in B_{2\sigma}$ we have
 \begin{equation} \label{eq:Kw2}
 \begin{aligned}
  |K w_2 &(x)| =  \left|\int_{\mathbb R^n} (w_2(x+y) - 2 w_2(x) +  w(x-y )) K (y) dy \right| \\
  & = \left|\int_{\mathbb R^n} (w_2(x+y) +  w(x-y )) K (y) dy \right| \\
  & \leq 2 \left|\int_{\mathbb R^n} w_2(x+y) K (y) dy \right| \\
  &= 2  \left| \int_{\mathbb R^n} (1-\eta(x+y)) \partial_{i_{k-1}, \ldots, i_1} u (x+y) \partial_{i_k} K (y) dy  \right|\\
  & \leq \left| \int_{B_{6\sigma}} (1-\eta(x+y)) \partial_{i_{k-1}, \ldots, i_1} u (x+y) \partial_{i_k} K (y) dy  \right|   
  \\
  & \quad \quad + \left| \int_{\mathbb R^n \setminus B_{6\sigma}} (1-\eta(x+y)) \partial_{i_{k-1}, \ldots, i_1} u (x+y) \partial_{i_k} K (y) dy  \right| 
  \\
  &= I_1 + J_1.
  \end{aligned}
  \end{equation}
 To estimate $I_1$, we note that due to the properties of $\eta$ and the triangle 
inequality $1-\eta (x+y) = 0$ if $|y| \leq \sigma$ and hence we get from the properties  
of $K$ that
\begin{equation} \label{eq:I1}
  I_1 \leq  C H \|\nabla^{k-1} u \|_{L^\infty(B_{8\sigma})} \int_{\mathbb R^n \setminus 
B_{\sigma}}  \frac 1 {|y|^{n+1+s}} dy = C \frac 
{H}{\sigma^{1+s}} \|\nabla^{k-1} u \|_{L^\infty(B_{8\sigma})} . 
\end{equation}
 For $J_1$ we use partial integration to get
 \begin{align*}
  J_1 &\leq \left| \int_{\mathbb R^n \setminus B_{6\sigma}(0)} \partial_{i_{k-2}, \ldots, 
i_1} u (x+y) \partial_{i_k, i_{k-1}} K (y) dy \right| + 
\left| \int_{\partial B_{6\sigma}(0)} |\partial_{i_{k-2}, \ldots, 
i_1} u (x+y) | |\partial_{i_k} K (y) | dS(y) \right| \\ 
 \\ &\leq C H^2  \|\nabla^{k-2} u \|_{L^\infty (B_{14\sigma})} \int_{B_{12\sigma} \setminus 
B_{6\sigma}} \frac 1 {|y|^{n+2+s}} +  C H \|\nabla^{k-2}\|_{L^\infty(B_{14\sigma})} 
\int_{\partial B_{6\sigma}} \frac 1 {|y|^{n+s+1}} dS(y) 
 \\ & +
\left| \int_{\mathbb R^n \setminus B_{12\sigma}(0)} \partial_{i_{k-2}, 
\ldots, 
i_1} u (x+y) \partial_{i_k,i_{k-1}} K (y) dy \right| 
\\
& \leq\frac { C H^2 \|\nabla^{k-2} u \|_{L^\infty (B_{14\sigma})} }{(6\sigma)^{2+s}} + J_2.
 \end{align*}
 where 
 $$
 J_2 = \left| \int_{\mathbb R^n \setminus B_{12\sigma}(0)} \partial_{i_{k-2}, 
\ldots, 
i_1} u (x+y) \partial_{i_k, i_{k-1}} K (y) dy \right|.
$$
Setting 
$$
 J_l =  \left| \int_{\mathbb R^n \setminus B_{6l\sigma}(0)} \partial_{i_{k-l}, 
\ldots, 
i_1} u (x+y) \partial_{i_k, \ldots, i_{k+1-l}} K (y) dy \right|
$$
we obtain as above using integration by parts and \eqref{eq:DerivativesKernel}
 \begin{align*}
  J_l &\leq \left| \int_{\mathbb R^n \setminus B_{6l\sigma}(0)} \partial_{i_{k-l-1}, 
\ldots, 
i_1} u (x+y) \partial_{i_k, \ldots, i_{k-l}} K (y) dy \right| \\ & \quad \quad \quad + 
\left| \int_{\partial B_{6l\sigma}(0)} |\partial_{i_{k-l-1}, \ldots, 
i_1} u (x+y) | |\partial_{i_k, \ldots, i_{k+1-l}} K (y) | dS(y) \right| \\ 
\\ 
 &\leq C H^{l+1} (l+1)! \|\nabla^{k-l-1} u \|_{B_{6(l+1)\sigma+2\sigma}} 
\int_{B_{6(l+1)\sigma} \setminus 
B_{6l\sigma}} \frac 1 {|y|^{n+l+1+s}}  \\ & \quad \quad \quad +  C H^l l! 
\|\nabla^{k-1-l}\|_{L^\infty(B_{6(l+1)\sigma+ 2\sigma})} 
\int_{\partial B_{l\sigma}} \frac 1 {|y|^{n+s+l}} dS(y) + J_{l+1}
\\ 
& \leq C H^{l+1} (l+1)! \|\nabla ^{k-(l+1)}\|_{B_{(6(l+1)\sigma)} } \frac 1 
{(6l\sigma)^{l+1+s}} + J_{l+1}.
 \end{align*}
 Iterating this estimate yields
 \begin{equation} \label{eq:J1}
 \begin{aligned}
 J_1 \leq C \sum_{l=2}^{k-1} \frac {H^l l! \|\nabla^{k-l} u 
\|_{L^\infty(B_{6l\sigma+2\sigma})}}{(6l\sigma)^{l+s}} + J_{k} \\
\leq C \left( \sum_{l=2}^{k-1} \frac {H^l l! \|\nabla^{k-l} u 
\|_{B_{6l\sigma+2\sigma}}}{(6l\sigma)^{l+s}} + \frac {H^k k! \| u 
\|_{\mathbb R^n}} {(6k\sigma)^{k+s}} \right).
 \end{aligned}
 \end{equation}
 Together the estimates \eqref{eq:APriori} -- \eqref{eq:J1} prove the statement of 
the theorem for all $u \in C^\infty (\mathbb R^n, \mathbb R)$ with compact support.

To get the statement for $u \in L^\infty (\mathbb R^n, \mathbb R) \cap 
C^\infty(\Omega, \mathbb R)$, we let $u_m $ be such that
$$
 u_m = u \text{ on } B_{m} ,
$$
and $$\|u_m\|_{L^\infty} \leq 
\|u\|_{L^\infty}.$$
We can then apply what we have proven so far to the function $u_m$ instead of $u$ to get
\begin{multline} \label{eq:um}
  \sigma \| \nabla^{k+1} u_m \|_{B_{\sigma}(x_0)} \leq C \bigg( \sigma^s 
  \|\nabla ^k f_m\|_{B_{2\sigma}(x_0)} + \|\nabla^k u_m\|_{B_{4r}(x_0)} \\ + \sigma^s 
\sum_{l=1}^{k-1} \frac {H^l l! 
\|\nabla^{k-l} u_m 
\|_{B_{6l\sigma+2\sigma} (x_0)}}{(6l\sigma)^{l+s}} + \sigma^s \frac {H^k 
k! \| u_m
\|_{\mathbb R^n}} {(6k\sigma)^{k+s}} \bigg) 
  \end{multline}
  where $f_m = Ku_m$. It is obvious that due to the properties of the approximations $u_m$ we can go to 
 the limit in the inequality and thus obtain the inequality for $u$ once we have shown that 
 $$
 \|\nabla ^k f_m\|_{B_{2\sigma}(x_0)}  \rightarrow  \|\nabla ^k f\|_{B_{2\sigma}(x_0)}  
 $$
 for $m \rightarrow \infty$.
For $x \in B_{2\sigma}$ and $\alpha \in \mathbb N^n$ with $|\alpha|=k$ we calculate
\begin{align*}
 \partial^\alpha (K u_m)(x) = \partial^\alpha Ku + \partial^\alpha K(v_m)
\end{align*}
where $v_m = u_m - u$ and using that $v_m=0$ on $B_m$  
\begin{multline*}
 \partial^\alpha K(v_m)(x) = \partial^\alpha \int_{\mathbb R^n - B_{\frac m2}} 
v_m(y) \left( K(y+x) + K(y-x)\right) dy \\ = \int_{\mathbb R^n - B_{\frac m2}} 
v_m(y) \left( \partial^{\alpha} K(y+x) + \partial^\alpha K(y-x)\right) dy 
\end{multline*}
Hence,
$$
\|\nabla^k K(v_m)\|_{B_{2\sigma}} \leq C H^{|\alpha|} |\alpha|! m^{-s-|\alpha|} \|v_m\|_{\mathbb R^n}
\leq C H^{|\alpha|} |\alpha|! m^{-s-|\alpha|} \|u\|_{\mathbb R^n} \xrightarrow{m\rightarrow \infty} \infty
$$
and thus
$$
 \|\nabla^k K u_m\|_{B_{2\sigma}} \rightarrow \|\nabla^k K u\|_{B_{2\sigma}} =  \|\nabla^k f\|_{B_{2\sigma}} 
$$
 \end{proof}

\section{Proof of the Theorem for $Ku(x) = f(x)$} \label{sec:Proof1}

Let us first illustrate this method for the special case that $Ku(x) =f(x)$, i.e. that the righthand side of our equation does not depend on $u$. 
 
 \subsection{A Recursive Estimate}

Following \cite{Albanese2015} we define the quantities
\begin{gather*}
 N_k = \sup_{0 < r < 1} \left( |1-r|^{k+s} \|\nabla^k f\|_{L^{\infty} (B_r)} \right) 
\quad \text{ for } k \geq 0 \\
 M_k = \sup_{0 < r < 1} \left( |1-r|^{k} \|\nabla^k u\|_{L^{\infty} (B_r)} \right) 
\quad \text{ for } k \geq 1, \\
 M_0 = \|\nabla^k u\|_{L^{\infty} (\mathbb R^n)}.
\end{gather*}

We will deduce the following estimate for these quantities from Theorem 
\ref{thm:ScaledHigherOrderEstimate}.

\begin{theorem} \label{thm:RecursiveEstimate}
 We have
  $$
  M_{k+1} \leq C \left(N_k + k \sum_{l=0}^{k} \binom{k}{l} M_{k-l} (2e)^l H^l l! 
\right)
  $$
for all $k \in \mathbb N_0$ and a constant $A$.
\end{theorem}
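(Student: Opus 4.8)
The plan is to derive Theorem~\ref{thm:RecursiveEstimate} directly from Theorem~\ref{thm:ScaledHigherOrderEstimate} by choosing, for each radius $r \in (0,1)$, a scale $\sigma$ comparable to $(1-r)/(k+1)$ so that the enlarged ball $B_{6\sigma(k+1)}(x_0)$ still fits inside $B_1(0)$ for every $x_0 \in B_r(0)$; concretely, for $x_0 \in B_r$ one needs $|x_0| + 6\sigma(k+1) < 1$, so $\sigma = \frac{1-r}{12(k+1)}$ works. With this choice I would apply the a-priori estimate at such an $x_0$, multiply through by $(1-r)^{k+1}$, and take the supremum over $x_0 \in B_r$ and then over $r \in (0,1)$, so that the left-hand side becomes (up to the constant hidden in the relation between $\sigma$ and $1-r$, times a factor like $(k+1)$ coming from $\sigma^{-1}(1-r)$) the quantity $M_{k+1}$.

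Next I would match each term on the right-hand side of Theorem~\ref{thm:ScaledHigherOrderEstimate} against the defining suprema of $N_k$, $M_{k-l}$, $M_0$. The term $\sigma^s\|\nabla^k f\|_{B_{2\sigma}(x_0)}$ contributes $N_k$ after multiplying by $(1-r)^{k+1}$: since $\sigma^s (1-r)^{k+1} = \sigma^{s+1}(1-r)^k \cdot \frac{1-r}{1}$ roughly, and on $B_{2\sigma}(x_0) \subset B_{r'}$ with $1-r' \gtrsim \sigma$, we get a bound by $C\,(k+1)^{-1}\,(1-r)\,N_k \le C N_k$; similarly $\|\nabla^k u\|_{B_{4\sigma}(x_0)}$ is controlled on a ball of radius $r' $ with $1-r' \gtrsim 1-r$, hence by $C M_k$, which is the $l=0$ term of the sum. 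For the sum $\sum_{l=1}^{k-1} \frac{H^l l!\,\|\nabla^{k-l}u\|_{B_{6l\sigma+2\sigma}(x_0)}}{(6l\sigma)^{l+s}}$, the ball $B_{6l\sigma+2\sigma}(x_0)$ lies in some $B_{r_l}$ with $1-r_l \gtrsim (1-r)\frac{k-l}{k+1}$ (since $6l\sigma + 2\sigma \lesssim \sigma(k+1) \cdot \frac{?}{}$ — more precisely $1 - (|x_0| + 6l\sigma + 2\sigma) \gtrsim \sigma(6(k+1) - 6l - 2) \gtrsim \sigma(k-l)$), so $\|\nabla^{k-l}u\|_{B_{r_l}} \le (1-r_l)^{-(k-l)} M_{k-l} \le C\big(\sigma(k-l)\big)^{-(k-l)} M_{k-l}$. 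Plugging in and simplifying the power of $\sigma$ (the $\sigma^s$ prefactor cancels the $\sigma^{l+s}$ against $\sigma^{k+1}$ from $(1-r)^{k+1}$ up to $\sigma^{k-l}$, matching the $\sigma^{-(k-l)}$ just produced) leaves a combinatorial factor
$$
\frac{(k+1)^{k-l}\, k^? }{ l^{l}(k-l)^{k-l}} H^l l!,
$$
and here is where Lemma~\ref{lem:Binomial} enters: the ratio $\frac{k^k}{(k-l)^{k-l} l^l}$ is bounded by $(2e)^l \binom{k}{l}$, which produces exactly the claimed summand $\binom{k}{l} M_{k-l} (2e)^l H^l l!$, with an extra factor of $k$ absorbing the discrepancy between $k^k$ and the $(k+1)^{k-l} l^l$ that actually appears. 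The last term $\sigma^s \frac{H^k k! \|u\|_{\mathbb R^n}}{(6k\sigma)^{k+s}}$ is handled the same way and becomes the $l=k$ term $\binom{k}{k} M_0 (2e)^k H^k k!$ (here $M_0 = \|u\|_{\mathbb R^n}$), again up to the factor $k$.

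The main obstacle I expect is the bookkeeping of the powers of $\sigma$ and of $k+1$ versus $k$: one must verify that after multiplying by $(1-r)^{k+1}$ and substituting $\sigma \sim (1-r)/(k+1)$, every term collapses to something of the form $N_k$ or $(\text{combinatorial})\cdot M_{k-l}$ with \emph{no leftover positive power of $\sigma$ or of $k$ beyond the single factor $k$ displayed in the theorem}. The critical point is that the nested-ball radii $6l\sigma + 2\sigma$ grow linearly in $l$ but the available room $1 - |x_0|$ is $\sim 12\sigma(k+1)$, so the shrinkage factor for the gradient norm is $\sim(\sigma(k-l))^{-(k-l)}$ rather than $(\sigma k)^{-(k-l)}$ — it is precisely this $(k-l)^{-(k-l)}$ (not $k^{-(k-l)}$) that forces the use of the sharp binomial estimate of Lemma~\ref{lem:Binomial} instead of the cruder $k^l/l! \le e^k$. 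I would also need to be slightly careful at the endpoints $l=0$ and $l=k$, where the sum in Theorem~\ref{thm:ScaledHigherOrderEstimate} runs only from $1$ to $k-1$ and the $l=0,k$ contributions come from the separate $\|\nabla^k u\|$ and $\|u\|_{\mathbb R^n}$ terms; but after identification these fit the unified sum $\sum_{l=0}^{k}$ in the statement. Routine estimates (comparing $\|\cdot\|_{B_\rho(x_0)}$ with $\|\cdot\|_{B_{r'}(0)}$ via $B_\rho(x_0)\subset B_{|x_0|+\rho}(0)$, and absorbing absolute constants) I would not spell out in detail.
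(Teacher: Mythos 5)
Your strategy coincides with the paper's: apply Theorem~\ref{thm:ScaledHigherOrderEstimate} at each $x_0$ with $\sigma$ of order $(1-|x_0|)/(6k)$, multiply by $(1-|x_0|)^{k+1}$, compare each ball $B_{6l\sigma+2\sigma}(x_0)$ with a centered ball whose distance to $\partial B_1$ is of order $(k-l)\sigma$, and control the resulting factor $(k+1)^{k+1}/\bigl((k-l)^{k-l}l^l\bigr)$ by Lemma~\ref{lem:Binomial}. All the structural identifications, including the endpoint terms $l=0$ and $l=k$, are correct.

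However, the step you dismiss as bookkeeping is precisely where the proposal, as written, breaks. Every constant in this computation gets raised to a power of order $k$, so ``$\gtrsim$'' and ``absorbing absolute constants'' are not harmless: writing $1-r_l\geq c\,(k-l)\sigma$ and then $(1-r_l)^{-(k-l)}\leq C\,((k-l)\sigma)^{-(k-l)}$ hides a factor $c^{-(k-l)}$, not a single constant $C$. Concretely, with your choice $\sigma=\frac{1-r}{12(k+1)}$ you have $1-r=12(k+1)\sigma$, while the denominators carry $(6l\sigma)^{l+s}$ and $(6(k-l)\sigma)^{k-l}$ (resp.\ $(6k\sigma)^{k+s}$ for the tail); the powers of $\sigma$ cancel but the numerical prefactors leave $12^{k+1}/6^{k+s}=6^{1-s}\,2^{k+1}$. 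For intermediate $l$ (say $l=1$) this $2^{k+1}$ cannot be absorbed into $C$, into $\binom{k}{l}$, or into $(2e)^l$, so you only obtain the stated inequality with an extra factor $2^{k+1}$ — which is not Theorem~\ref{thm:RecursiveEstimate} (it would still suffice for the analyticity conclusion after enlarging $A$, but that is a different statement). The cure is to take $\sigma$ within a factor $1+O(1/k)$ of the largest admissible value $\frac{1-|x_0|}{6(k+1)}$; the paper uses $\sigma=\frac{1-|x_0|}{6(k+2)}$, for which the leftover prefactor is $\bigl(\tfrac{k+2}{k+1}\bigr)^{k+1}\leq e$ instead of $2^{k+1}$, and all remaining numerical constants cancel to the bounded quantity $6^{1-s}$.
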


\begin{proof}
For $x_0 \in B_1(0)$ and $k \in \mathbb N$ we apply Theorem 
\ref{thm:ScaledHigherOrderEstimate} with $\sigma = \frac {1-|x_0|} {6(k+2)}$ to get
\begin{multline*}
    \| \nabla^{k+1} u \|_{B_{\sigma}} \leq C \bigg( \sigma^{s-1} 
  \|\nabla ^k f\|_{B_{2\sigma}} + \sigma^{-1} \|\nabla^k u\|_{B_{4r}} \\ + 
\sigma^{s-1} \sum_{l=2}^{k-1} \frac {H^l l! 
\|\nabla^{k-l} u 
\|_{B_{6(\sigma+2\sigma)}}}{(6l\sigma)^{l+s}} + \sigma^{s-1} \frac {H^k k! 
\| u 
\|_{\mathbb R^n}} {(6k\sigma)^{k+s}} \bigg)
\end{multline*}
where we use $B_r=B_r(x_0)$ to shorten notation. Hence,
\begin{multline*}
 (1-|x_0|)^{k+1}|\nabla^{k+1} u(x_0)|  \leq 
 (1-|x_0|)^{k+1} C \bigg( \sigma^{s-1} 
  \|\nabla ^k f\|_{B_{2\sigma}} + \sigma^{-1} \|\nabla^k u\|_{B_{4r}} \\ + 
\sigma^{s-1} \sum_{l=2}^{k-1} \frac {H^l l! 
\|\nabla^{k-l} u 
\|_{B_{6l\sigma+2\sigma}}}{(6l\sigma)^{l+s}} + \sigma^{s-1} \frac {H^k k! 
\| u 
\|_{\mathbb R^n}} {(6k\sigma)^{k+s}} \bigg .
\end{multline*}
We estimate
\begin{align*}
(1-|x_0|)^{k+1} \sigma^{s-1} \|\nabla^k f\|_{B_{2\sigma} (x_0} = (6(k+2) 
\sigma)^{k+1} \sigma ^{s-1}\|\nabla^k f\|_{B_{2\sigma} (x_0)} \\
\leq  \frac{(6(k+2) \sigma)^{k+1} \sigma ^{s-1}} {((6(k+2)-2) \sigma)^{s+k}} N_k 
 = \frac{(k+2)^{k+1}} {(k+2-\frac 1 3)^{k+s}} 
 \leq \left( 1 + \frac {\frac 1 3} 
{k+2 - \frac 1 3} \right)^{k+1} N_k
 \leq C N_k
\end{align*}
as $s > 1$ and 
$$
 0 < \left( 1 + \frac {\frac 1 3} 
{k+2 - \frac 1 3} \right)^{k+1} < \left(1 + \frac {\frac 13} {k+\frac 5 3)} 
\right)^{k+\frac 53} \rightarrow e^{\frac 1 3}.
$$
Similarly,
\begin{align*}
(1-|x_0|)^{k+1} \sigma^{-1} \|\nabla^k u\|_{B_{4\sigma} (x_0}
\leq \frac {(6(k+2)^{k+1})}{ (6(k+2)-4)^{k}} M_k
\leq C (k+2) M_k.
\end{align*}
Futhermore, we get for $1\leq l < k$
\begin{align*}
 (1-|x_0|)^{k+1} & \sigma^{s-1} \frac {H^l l! 
\|\nabla^{k-l} u 
\|_{B_{6l\sigma+2\sigma}}}{(6l\sigma)^{l+s}} 
\leq  \frac{(6(k+2))^{k+1}}{(6(k+2-l) - 2)^{k-l} (6l)^{l+s}} H^l l! M_{k-l} \\
& =6 \frac{(k+2)^{k+1}}{((k-l) + \frac{10}{6})^{k-l} l^{l+s}} H^l l! M_{k-l}
\end{align*}
Note that
\begin{align*}
 \frac{(k+2)^{k+1}} {(k-l+{\frac {10} 6})^{k-l} l^{l+1}}
 &= \left(\frac{k+2}{k+1}\right)^{k+1} \left(\frac{k-l}{k-l + \frac {10} 6} \right) 
^{k-l}  \left( \frac l {l+1}\right)^{l+1} \frac{(k+1)^{k+1}} {(k-l)^{k-l} 
(l+1)^{l+1}}
\\
&= \left(1 +\frac{1}{k+1}\right)^{k+1} \left(1 - \frac{\frac{10} 6}{k-l + \frac 
{10} 6} \right) 
^{k-l} \left( 1 - \frac{1} {l+1}\right) ^{l+1}\frac{(k+1)^{k+1}} {(k-l)^{k-l} 
(l+1)^{l+1}}
\\
& \leq
C \frac{(k+1)^{k+1}} {(k-l)^{k-1} (l+1)^{l+1}}
\leq  C  (2e)^l\binom{k+1}{l}. 
\end{align*}
In the last step we used Lemma \ref{lem:Binomial}.
Finally,
$$
 \frac{(1-|x_0|)^{k+1}}{ \sigma (6k \sigma)^k} = \frac{(6(k+2))^{k+1}}{(6k)^k} = 6 
(k+2) (1+\frac 2 k)^k \leq C 6(k+2). 
$$
These estimates show that 
$$
 (1-|x_0|)^{k+1}|\nabla^{k+1} u(x_0)| \leq C \left(N_k + k \sum_{l=0}^{k} \binom{k}{l} M_{k-l} (2e)^l H^l l! \right)
$$
Taking the supreme over all $x_0 \in B_1(0)$ proves the theorem.
\end{proof}

\subsection{The Conclusion using Cauchy's Method of Majorants}

%
%

We will now conclude the proof of Theorem \ref{thm:AnalyticityOfSolutions} using 
Cauchy's method of Majorants. 

As being analytic is a local statement, we can assume w.l.o.g that there are constants $C_f, A_f < \infty$ such that
$$
 N_k \leq C_f A_f^k k!
$$
for all $k \in \mathbb N_0$.  Setting $A := \sup\{A_k, 2eH\}$ Theorem \ref{thm:RecursiveEstimate} tells us that
\begin{equation} \label{eq:RecursiveInequality}
 M_{k+1} \leq C (N_k + k \sum_{l=0}^k \binom {k} {l} M_{k-l}(2eH)^l l!)
 \leq C  A^k k! +  C k \sum_{l=0}^k \binom {k} {l} M_{k-l}A^l l!.
\end{equation}
for all $k \in \mathbb N_0.$
We will show that this recursive estimate implies that $M_k \leq C_u A_u^k k!$ for 
suitably chosen constants $C_u, A_u$ by comparing it to the solution of an 
analytic ordinary differential equation.

For this we put
$$
 G(t) := C  \sum_{k \in \mathbb N_0} A^k t^k 
$$
and consider the solution to the initial value problem
$$
 \begin{cases}
  c'(t) = G(t) + (tG(t)c(t))'  \\
  c(0) = M_0.
 \end{cases}
$$
As near to $t$ we have $1-tG(t) \not= 0$ we can rewrite this equation as
$$
 c'(t) = \frac{2 G(t) + t G'(t)}{1-tG(t)} 
$$
near $0$. Hence, the above initial value problem has a unique analytic solution on some small time interval
$(-\varepsilon, \varepsilon).$ The derivatives $\tilde M_k = c^{(k)}(0)$ satisfy 
$$
 \tilde M_k \leq C_u A_u^k k!
$$
for suitable constants $C_u, A_u$ and 
the recursive relation
$$
 \tilde M_{k+1} = C (N_k + k \sum_{l=0}^k \binom k l  \tilde M_{k-l} (eH)^l l !)
$$ 
Comparing this with \eqref{eq:RecursiveInequality} we deduce by induction that
$$
M_{k} \leq \tilde M_{k} \leq C_u A_u^k k!.
$$

\section{Proof of the Theorem for $Ku(x) = f(x,u(x))$} \label{sec:Proof2}

Let us now move to the case that
$$
 K(u) = f(x,u(x)) \text{ in } B_1(0).
$$
As in the last section we have
  $$
  M_{k+1} \leq C \left(  N_k + k \sum_{l=0}^{k} \binom{k}{l} M_{k-l} (2e)^l H^l l! 
\right)
  $$
for all $k \in \mathbb N_0$ and a constant $A$ where now
$$
   N_k = \|\nabla^k (f(x,u(x)))\|_{B_1}.
$$
We introduce the terms
$$
 \tilde M_k = M_k +1 
$$
and 
$$
 \tilde N_k = \|\nabla^k  f\|_{K}
$$
where $K$ is the image of $x \rightarrow (x,u(x))$. As being analytic is a local property, we can again assume without loss of generality that
$$
  \tilde N_k \leq C A_f^k k! 
$$
for a constant $A_f < \infty.$ We still have
\begin{equation} \label{eq:RecursiveInequality2}
  \tilde M_{k+1} \leq C \left(  N_k + k \sum_{l=0}^{k} \binom{k}{l}  \tilde M_{k-l} (2e)^l H^l l! 
\right)
  \end{equation}

We need a higher order chain rule to estimate $N_k$ in terms of $\tilde N_k$ and tilde $M_k$.

\subsection{Higher Order Chain Rule}

\begin{proposition} \label{prop:FraaDiBruno}
 Let $g:\mathbb R^{m_1} \rightarrow \mathbb R^{m_2}$ and $f: \mathbb R^{m_2} \rightarrow \mathbb R$ be two $C^k$-functions. Then for an multiindex 
 $\alpha \in \mathbb N ^{m_1}$ of length $|\alpha| \leq k$ and $x \in \Omega$ the derivative
 $$
  \partial^\alpha (f \circ g ) (x) = P_{m_1, m_2}^\alpha ( \{\partial^\gamma f(g(x))\}_{|\gamma| \leq |\alpha|}, \{\partial^\gamma g_i\}_{0 \leq \gamma \leq \alpha} )
 $$
 where $P^\alpha_{m_1,m_2}$ is a linear combination with positive coefficients of terms of the form 
 $$
  \partial^k_{x_{i_1}, x_{i_k}}( g(x))  \partial^{\gamma_1}g_{i_1} \cdots  \partial^{\gamma_k}g_{i_k}  
 $$
 with $1 \leq k \leq |\alpha|$ and $|\gamma_1| + \ldots |\gamma_k| = |\alpha|.$.
 \end{proposition}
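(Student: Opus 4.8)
The plan is to prove the higher-order chain rule (a multivariate Fa\`a di Bruno formula) by induction on $|\alpha|$. First I would fix the base case $|\alpha|=1$: for $\alpha=e_j$ the ordinary chain rule gives $\partial_{x_j}(f\circ g)(x)=\sum_{i=1}^{m_2}\partial_i f(g(x))\,\partial_{x_j}g_i(x)$, which is already a positive linear combination of terms of the advertised form with $k=1$ and $|\gamma_1|=1$. I would be a little careful about the notation in the statement: the "terms of the form $\partial^k_{x_{i_1},\dots,x_{i_k}}(g(x))\,\partial^{\gamma_1}g_{i_1}\cdots\partial^{\gamma_k}g_{i_k}$" should be read as $\bigl(\partial^k_{y_{j_1}\cdots y_{j_k}}f\bigr)(g(x))\,\partial^{\gamma_1}g_{j_1}(x)\cdots\partial^{\gamma_k}g_{j_k}(x)$, i.e. the outer function $f$ differentiated $k$ times in some coordinates $j_1,\dots,j_k$, evaluated at $g(x)$, times the product of the listed partials of the components of $g$; the constraint is $1\le k\le|\alpha|$ and $|\gamma_1|+\dots+|\gamma_k|=|\alpha|$ with each $|\gamma_r|\ge1$. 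The content to be proved is exactly that only such terms appear and that the combining coefficients are nonnegative integers.

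The inductive step is the heart of the argument. Suppose the formula holds for all multiindices of length $\le N$, and let $|\alpha|=N+1$; write $\alpha=\beta+e_j$ with $|\beta|=N$. Then $\partial^\alpha(f\circ g)=\partial_{x_j}\bigl(\partial^\beta(f\circ g)\bigr)=\partial_{x_j}P^\beta$, where $P^\beta$ is the claimed positive combination of monomials $\bigl(\partial^k_{y_{j_1}\cdots y_{j_k}}f\bigr)(g)\,\prod_{r=1}^k\partial^{\gamma_r}g_{j_r}$. I would apply the product rule to $\partial_{x_j}$ acting on such a monomial: it produces (i) one term in which $\partial_{x_j}$ hits the factor $\bigl(\partial^k f\bigr)(g)$, which by the ordinary chain rule equals $\sum_{i}\bigl(\partial^{k+1}_{y_{j_1}\cdots y_{j_k}y_i}f\bigr)(g)\,\partial_{x_j}g_i\cdot\prod_r\partial^{\gamma_r}g_{j_r}$ — this is a monomial of the right form with outer order $k+1$ and exponent multiindices $\gamma_1,\dots,\gamma_k,e_j$, whose lengths sum to $|\beta|+1=|\alpha|$; and (ii) for each $r$, a term in which $\partial_{x_j}$ hits $\partial^{\gamma_r}g_{j_r}$, turning it into $\partial^{\gamma_r+e_j}g_{j_r}$ while leaving the outer factor and the other $g$-factors intact — again a monomial of the right form, same outer order $k$, and $\sum|\gamma_{r'}|$ increased by one to $|\alpha|$. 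Summing over the monomials of $P^\beta$ with their nonnegative coefficients, and noting that differentiation of $g$ and of $f$ introduces no negative signs, shows $\partial^\alpha(f\circ g)$ is again a nonnegative-integer combination of admissible monomials. The bounds $1\le k\le|\alpha|$ are preserved because case (i) raises $k$ by one (and started from $k\le|\beta|<|\alpha|$, except we should check that when $k=|\beta|$ we still have $k+1=|\alpha|$, which is fine) while case (ii) keeps $k\ge1$.

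A few bookkeeping points deserve attention. One should make explicit that the set of "variables on which $f$ is differentiated" may include repetitions and that the constraint is purely on the total order $k=|\gamma|$ with $|\gamma|\le|\alpha|$; this is why the statement writes $\{\partial^\gamma f(g(x))\}_{|\gamma|\le|\alpha|}$. Likewise the $g$-derivatives appearing are $\partial^{\gamma_r}g_{j_r}$ with $0<|\gamma_r|$ and $\gamma_r\le\alpha$ componentwise, which follows since at each step we only add multiples of unit vectors already present in $\alpha$. I expect the main obstacle to be not any hard estimate but rather pinning down the notation so that the statement is unambiguous and the induction visibly closes — in particular making sure the polynomial $P^\alpha_{m_1,m_2}$ is presented as a \emph{fixed} universal expression depending only on $\alpha,m_1,m_2$ (not on $f$ or $g$), so that it can later be applied uniformly and its positive coefficients exploited when passing to the majorant estimates for $N_k$. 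Since positivity and the degree constraints are exactly what the method of majorants in Section~\ref{sec:Proof2} needs, I would state the induction so that these two features are manifest at every step.
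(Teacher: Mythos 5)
Your induction on $|\alpha|$ — base case from the ordinary chain rule, inductive step via the product rule splitting each monomial into the term where the new derivative hits the outer factor $(\partial^k f)\circ g$ (raising $k$ to $k+1$) and the terms where it hits one of the $\partial^{\gamma_r}g_{j_r}$ — is correct and is precisely the "easy inductive proof" the paper leaves to the reader. Your clarification of the statement's notation and your emphasis on positivity of the coefficients match what the paper needs later for the majorant argument.
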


For $m_1=m_2=1$ we will use the notation $P^k$ instead of $P^\alpha_{m_1,m_2}.$ 
We leave the easy inductive proof of this statement to the reader. Although very precise formulas of the higher order chain rule wer give by Faa di Bruno \cite{DiBruno1857} for the univariate  case and by for example Constanini and Savits in \cite{Constantine1996} for the multivariate case, the above proposition contains all that is needed in our proof.
 
Let us derive an easy consequences of Proposition \ref{prop:FraaDiBruno} that allows us in a sense to reduce the multivariate case to the univariate one.
 
 \begin{lemma} \label{lem:SingleVariable}
 For constants $a_\gamma= a_{|\gamma|}, b_{|\gamma|}  \in 
\mathbb R $ depending only on the 
length of the multiindex $\gamma$ we have
$$
 P^\alpha_{m_1, m_2} (\{a_{|\gamma|}\},\{b_{|\gamma|}\}) = 
P^{|\alpha|}(\{a_{|\gamma|}\}, \{b_{|\gamma|}\}).
$$
 \end{lemma}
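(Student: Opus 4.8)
The plan is to prove Lemma~\ref{lem:SingleVariable} by unwinding the structure of the polynomial $P^\alpha_{m_1,m_2}$ provided by Proposition~\ref{prop:FraaDiBruno} and observing that, once the arguments are constant in the direction of the multiindices, every monomial collapses to a monomial that depends only on $|\alpha|$. Concretely, $P^\alpha_{m_1,m_2}$ is a linear combination, with positive coefficients, of terms
$$
 \partial^k_{x_{i_1},\dots,x_{i_k}} f(g(x))\, \partial^{\gamma_1}g_{i_1}\cdots\partial^{\gamma_k}g_{i_k}
$$
with $1\le k\le|\alpha|$ and $|\gamma_1|+\dots+|\gamma_k|=|\alpha|$. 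Substituting $a_{|\gamma|}$ for each $\partial^\gamma f$ and $b_{|\gamma|}$ for each $\partial^\gamma g_i$, such a term becomes $a_k\, b_{|\gamma_1|}\cdots b_{|\gamma_k|}$, which visibly depends only on the integers $k$ and $|\gamma_1|,\dots,|\gamma_k|$, and in particular no longer on the dimensions $m_1,m_2$ nor on which components $i_1,\dots,i_k$ or which coordinate directions within the $\gamma_j$ were involved.

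The first step I would carry out is to make precise the combinatorial bookkeeping: group the monomials of $P^\alpha_{m_1,m_2}$ by the data $(k;\,p_1,\dots,p_k)$ where $p_j=|\gamma_j|$, so that after the substitution the value of $P^\alpha_{m_1,m_2}(\{a_{|\gamma|}\},\{b_{|\gamma|}\})$ equals $\sum_{(k;p_1,\dots,p_k)} c_\alpha(k;p_1,\dots,p_k)\, a_k\, b_{p_1}\cdots b_{p_k}$, where $c_\alpha(k;p_1,\dots,p_k)$ is the sum of the positive coefficients of all monomials with that combinatorial type. The claim will follow once I show $c_\alpha(k;p_1,\dots,p_k)$ depends only on $|\alpha|$ and not on $\alpha$ itself (and not on $m_1,m_2$).

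The cleanest way to see this is not to compute $c_\alpha$ directly but to exploit the uniqueness of the chain-rule expansion together with a reduction to a single variable. Given $\alpha\in\mathbb N^{m_1}$ with $|\alpha|=j$, pick the univariate functions $\tilde g:\mathbb R\to\mathbb R$ and $\tilde f:\mathbb R\to\mathbb R$, and consider $g(x):=(\tilde g(x_1+\dots+x_{m_1}),\dots)$ built so that $\partial^\gamma g_i$ at the relevant point equals $\tilde g^{(|\gamma|)}$ for every $\gamma\le\alpha$ and every $i$, and similarly a radial-type $f$ with $\partial^\gamma f=\tilde f^{(|\gamma|)}$; then on one hand $\partial^\alpha(f\circ g)=\tilde g(x\mapsto\text{sum})$ composed appropriately gives $\frac{d^j}{dt^j}(\tilde f\circ\tilde g)$ by the ordinary one-variable chain rule applied to $t\mapsto\tilde f(\tilde g(t))$ precomposed with the linear map $x\mapsto x_1+\dots+x_{m_1}$, which is exactly $P^{|\alpha|}(\{\tilde f^{(|\gamma|)}\},\{\tilde g^{(|\gamma|)}\})=P^{|\alpha|}(\{a_{|\gamma|}\},\{b_{|\gamma|}\})$; and on the other hand Proposition~\ref{prop:FraaDiBruno} evaluates the same derivative as $P^\alpha_{m_1,m_2}(\{\partial^\gamma f\},\{\partial^\gamma g_i\})=P^\alpha_{m_1,m_2}(\{a_{|\gamma|}\},\{b_{|\gamma|}\})$. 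Equating the two gives the lemma. The main obstacle is the bookkeeping in the middle step: one has to check that the model functions $f,g$ can genuinely be chosen so that \emph{all} the partial derivatives appearing as arguments of $P^\alpha_{m_1,m_2}$ take the prescribed values $a_{|\gamma|},b_{|\gamma|}$ simultaneously — taking $g_i(x)=\tilde g(\langle e,x\rangle)$ with $e=(1,\dots,1)$ and $\tilde g$ a polynomial with prescribed derivatives at a point handles $g$, and an analogous radial choice (or a polynomial in $\sum y_i$) handles $f$ — and that the precomposition with the linear map $x\mapsto\langle e,x\rangle$ really does turn the multivariate $\partial^\alpha$ into the single $\frac{d^{|\alpha|}}{dt^{|\alpha|}}$, which is immediate since $\partial^\alpha\big(\varphi(\langle e,x\rangle)\big)=\varphi^{(|\alpha|)}(\langle e,x\rangle)$.
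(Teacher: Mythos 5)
Your strategy is the same as the paper's: feed the model functions $g_i(x)=\tilde g(x_1+\cdots+x_{m_1})$ and an $f$ depending only on $y_1+\cdots+y_{m_2}$ into Proposition~\ref{prop:FraaDiBruno} and compare with the univariate chain rule for $\tilde f\circ\tilde g$. However, the step you yourself flag as ``the main obstacle'' --- that the model functions can be chosen so that \emph{all} arguments of $P^\alpha_{m_1,m_2}$ take the prescribed values \emph{and simultaneously} $f\circ g=(\tilde f\circ\tilde g)(x_1+\cdots+x_{m_1})$ --- cannot actually be carried out when $m_2>1$, and this is a genuine gap (one that the paper's own proof shares). If $f(y)=\tilde f(y_1+\cdots+y_{m_2})$ then indeed $\partial^\gamma f=\tilde f^{(|\gamma|)}$, but $f\circ g=\tilde f(m_2\,\tilde g(t))$, not $\tilde f(\tilde g(t))$; if instead one normalizes $f(y)=\tilde f\bigl(\tfrac{1}{m_2}\sum_i y_i\bigr)$ as the paper does, the composition is clean but $\partial^\gamma f(g(x))=m_2^{-|\gamma|}\tilde f^{(|\gamma|)}(\tilde g(t))$, so the arguments fed to $P^\alpha_{m_1,m_2}$ are not the same constants fed to $P^{|\alpha|}$. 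The identity as stated in the lemma is in fact false: for $m_1=1$, $m_2=2$, $|\alpha|=1$ one has $P^{(1)}_{1,2}(\{a_l\},\{b_l\})=a_1b_1+a_1b_1=2a_1b_1$, whereas $P^{1}(\{a_l\},\{b_l\})=a_1b_1$. What the substitution argument genuinely proves is $P^\alpha_{m_1,m_2}(\{a_l\},\{b_l\})=P^{|\alpha|}(\{m_2^{\,l}a_l\},\{b_l\})$ (equivalently $P^{|\alpha|}(\{a_l\},\{m_2 b_l\})$, since each monomial with outer index $k$ picks up exactly $m_2^{k}$ either way).

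This corrected identity is all that is needed downstream: in Lemma~\ref{lem:estimateComposition} it merely replaces $\tilde N_l$ by $(n+1)^{l}\tilde N_l$, which is absorbed into the constant $A_f$ in the majorant argument. So your proof (and the paper's) is repaired by stating the lemma with the factors $m_2^{\,l}$ and choosing $\tilde f$, $\tilde g$ to be polynomials with the correspondingly rescaled prescribed derivatives, rather than asserting that all prescribed values can be realized simultaneously.
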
 
 
 \begin{proof}
   Plugging functions $g$ and $f$ of the form $$g(x_1, \ldots ,x_{m_1} = \tilde 
g( x_1+ \cdots + x_{m_1} ) \cdot (1, \ldots, 1)^t$$ and $$f(y_1, \ldots 
y_{m_2})=\tilde f\left(\frac{y_1+ \cdots + y_{m_2}}{m_2}\right)$$ into the higher 
order chain rule we get from 
$$
 f\circ g = (\tilde f \circ \tilde g) (x_1 + \ldots x_{m_1})
$$
that
$$
P^\alpha_{m_1, m_2} (\{\partial^\gamma f(g(x))\}_{|\gamma| \leq |\alpha|}, \{\partial^\gamma g_i\}_{0 \leq \gamma \leq \alpha}) = P^{|\alpha|} (\{\partial^l \tilde f( \tilde g(x))\}_{l \leq |\alpha|}, \{\partial^l \tilde g\}_{0 \leq l \leq |\alpha|})
$$
So for constants $a_\gamma= a_{|\gamma|}, b_{\gamma}  \in 
\mathbb R $ depending only on the 
length of the multiindex $\gamma$ we have
$$
P^\alpha_{m_1, m_2} (\{a_{|\gamma|}\},\{b_{|\gamma|}\}) = 
P^{|\alpha|}(\{a_{|\gamma|}\}, \{b_{|\gamma|}\}).
$$

 \end{proof}

We will use this lemma to estimate $N_k$.

 \begin{lemma} \label{lem:estimateComposition}
  We have
 \begin{align*}
 N_k
& \leq C P^{k} ( \{ \tilde N_l\}, \{ \tilde M_l\}_{l=0, \ldots, k }).
 \end{align*}
 \end{lemma}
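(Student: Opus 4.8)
The plan is to apply the higher order chain rule (Proposition~\ref{prop:FraaDiBruno}) to the composition $x \mapsto f(x,u(x))$ together with Lemma~\ref{lem:SingleVariable} to collapse the multivariate combinatorial polynomial $P^\alpha_{m_1,m_2}$ into the univariate polynomial $P^{|\alpha|}$, and then estimate the individual monomials by the defining quantities $\tilde N_l$ and $\tilde M_l$. First I would fix a multiindex $\alpha$ with $|\alpha|=k$ and write $\partial^\alpha(f(x,u(x)))$ using Proposition~\ref{prop:FraaDiBruno} with $g(x)=(x,u(x))$, so that $m_1=n$, $m_2=n+1$; this expresses the derivative as a linear combination with positive coefficients of terms of the form $\partial^j f(x,u(x))\,\partial^{\gamma_1}g_{i_1}\cdots \partial^{\gamma_j}g_{i_j}$ with $|\gamma_1|+\cdots+|\gamma_j|=k$.

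Next I would bound each factor uniformly on $B_1$: the derivatives of $f$ at the point $(x,u(x))$ are bounded by $\tilde N_{j}$ by definition (this is exactly why $\tilde N_k$ was defined as the sup over the image of $x\mapsto(x,u(x))$), while for the inner function the first $n$ components of $g$ are linear so their derivatives are either constant or zero, and the last component is $u$, whose derivative of order $|\gamma|$ is bounded on $B_1$ by $M_{|\gamma|}\le \tilde M_{|\gamma|}$; for $|\gamma|=0$ the value $u(x)$ is bounded by $M_0\le\tilde M_0$ and for $|\gamma|=1$ the first-order derivatives of the identity components contribute $1\le\tilde M_1$. Since all the bounds on $\partial^\gamma f$ and $\partial^\gamma g_i$ depend only on $|\gamma|$, monotonicity of $P^\alpha_{m_1,m_2}$ in its arguments (it has positive coefficients) lets me replace each argument by the corresponding $\tilde N_l$ or $\tilde M_l$, giving
$$
 \|\partial^\alpha(f(x,u(x)))\|_{B_1} \le C\, P^\alpha_{n,n+1}\big(\{\tilde N_l\},\{\tilde M_l\}_{l=0,\dots,k}\big),
$$
and then Lemma~\ref{lem:SingleVariable} identifies the right-hand side with $C\,P^{k}(\{\tilde N_l\},\{\tilde M_l\}_{l=0,\dots,k})$. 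Taking the supremum over $|\alpha|=k$ yields the claim.

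I expect the only genuinely delicate point to be the bookkeeping in the last two steps: one must be sure that the hypotheses of Lemma~\ref{lem:SingleVariable} really apply, i.e. that after the crude factor-by-factor estimate every surviving argument of $P^\alpha_{n,n+1}$ depends on the multiindex only through its length, and that the shift $\tilde M_k=M_k+1$ is exactly what absorbs the otherwise awkward constant factors (the $1$'s coming from $\partial_{i}g_i$ with $g_i$ the identity components, and the value $u(x)$). Everything else — the form of $P^\alpha$, the linearity of the identity components, the $L^\infty$ bounds on $B_1$ — is routine once this alignment is in place.
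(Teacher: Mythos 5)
Your proposal follows the paper's own argument step for step: Fa\`a di Bruno applied to $g(x)=(x,u(x))$ with $m_1=n$, $m_2=n+1$, the observation that the identity components of $g$ contribute only $0$'s and $1$'s, monotonicity of $P^\alpha_{n,n+1}$ through the positivity of its coefficients, Lemma \ref{lem:SingleVariable} to collapse to $P^{|\alpha|}$, and a supremum over $|\alpha|=k$. The one point where your bookkeeping does not match the definitions is the claim that $\partial^{\gamma}u$ is bounded \emph{on $B_1$} by $M_{|\gamma|}\le\tilde M_{|\gamma|}$: since $M_l=\sup_{0<r<1}(1-r)^{l}\|\nabla^l u\|_{B_r}$ is a weighted supremum, the unweighted norm $\|\nabla^l u\|_{B_1}$ is not controlled by $M_l$ (it may blow up as $r\to1$). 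The paper handles this by estimating on $B_r$ and multiplying through by $(1-r)^{|\alpha|+s}$, then distributing the factor $(1-r)^{|\gamma_1|}\cdots(1-r)^{|\gamma_m|}=(1-r)^{|\alpha|}$ (using $\sum_i|\gamma_i|=|\alpha|$) onto the factors $\partial^{\gamma_i}u$, so that each is bounded by the weighted quantity $M_{|\gamma_i|}$ --- this is precisely the ``delicate bookkeeping'' you flagged but did not carry out. (Your reading is partly excused by the paper's own ambiguity: Section \ref{sec:Proof2} redefines $N_k$ without the weight, while the proof of the lemma reinstates it.) With that weight distribution made explicit, your argument coincides with the paper's.
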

 
 \begin{proof}
Applying Faa di Brunos formula to 
$f\circ g$ where
$$
 g(x) = (x, u(x))
$$
we get
 \begin{align*}
   \partial ^\alpha (f(x,u))\ & = P^{\alpha}_{n,n+1} 
(\{\partial^\gamma f \}, \{\partial^\gamma g \}) 
\end{align*}
where $P^{\alpha}_{n,n +1} 
(\{\partial^\gamma f \}, \{\partial^\gamma g \}) $ is a linear combination with positive coefficients of terms of the form 
$$
  \partial^m_{x_{i_1}, \ldots,  x_{i_m}}f( g(x)) \, \partial^{\gamma_1}g_{i_1} 
\cdots  \partial^{\gamma_k}g_{i_m}  
 $$
 with $1 \leq m \leq |\alpha|$ and $\gamma_1 + \ldots + \gamma_m = \alpha.$ Note 
that due to the special structure of $g$ we have $\partial^\gamma g_i = 1$ for 
$i=1, \ldots, n$ and $|\gamma|=1$ and $\partial^\gamma g_i = 0$ for $i=1, \ldots, n$ 
and $|\gamma|\geq 2$.
Hence,
 \begin{align*}
  (1-r)^{|\alpha|+s} \|\partial^m_{x_{i_1}, x_{i_m}} f( g(x))  
\partial^{\gamma_1}g_{i_1} \cdots  \partial^{\gamma_k}g_{i_m}\|_{(B_r(0))}
  & \leq   \| \partial^k_{x_{i_1}, x_{i_k}} f( g(x)) \|_{B_r(0)}
 \tilde M_{|\gamma_1|} \cdots  \tilde M_{|\gamma_m|} \\
 & \leq  \tilde  N_k
 \tilde M_{|\gamma_1|} \cdots  \tilde M_{|\gamma_m|}.
 \end{align*}
 We hence deduce using Lemma \ref{lem:SingleVariable} that  
  \begin{align*}
   \|  (1-r)^{|\alpha|+s }\partial ^\alpha (f (x,u))\|_{B_r(0)} 
& \leq P^{\alpha} ( \{ \tilde N_k\}, \{ \tilde M_k\}_{k=0, \ldots, |\alpha| }).
 \end{align*}
 Applying this estimate for all multiindices $\alpha \in \mathbb N_0^n$ with $|\alpha|=k$ proves the claim.
 \end{proof}
 
 \subsection{Conclusion of the Proof}
 Combining \eqref{eq:RecursiveInequality2}  with Lemma \ref{lem:estimateComposition} we get
\begin{multline*}
 \tilde M_{k+1} \leq C (P^{k}(\{ \tilde N_l\}, \{ \tilde M_l\}) + k \sum_{l=0}^k \binom {k} {l} \tilde M_{k-l}(2eH)^l l!)
  \\ \leq C (P^{k}(\{ A^l l!\}, \{ \tilde M_l\})+  C k \sum_{l=0}^k \binom {k} {l}  \tilde M_{k-l} A^l l!
\end{multline*}
where again $A:= \sup \{A_f, 2eH\}$.
As above we conclude comparing this with the solution to the initial value problem
$$
\begin{cases}
 c'(t)= G(c(t)) + (tG(t)c(t))', \\
 c(0) = M_0.
 \end{cases} 
$$

\bibliographystyle{plain}
\bibliography{Master}
\end{document}